\newtheorem{thm}{Theorem}
\newtheorem{prop}{Proposition}
\newtheorem{corollary}{Corollary}
\newtheorem{example}{Example}
\newtheorem{defin}{Definition}
\theoremstyle{definition}
\let\thm@indent\indent}{\let\thm@indent\noindent}%
  {}{}
\newcommand{\cA}{{\mathcal A}}
\newcommand{\bx}{{ \bold{x} }}
\newcommand{\bX}{{ \bold{X} }}
\newcommand{\by}{{\bold y}}
\newcommand{\tby}{{\tilde{\by}}}
\newcommand{\tz}{{\tilde{z}}}
\newcommand{\tilt}{{\tilde{t}}}
\newcommand{\ttau}{{\tilde{\tau}}}
\newcommand{\tZ}{{\tilde{Z}}}
\newcommand{\bY}{{\bold Y}}
\newcommand{\tbY}{{ \tilde{\bY}}}
\newcommand{\bFF}{{\bold F}}
\newcommand{\bG}{{\bold G}}
\newcommand{\bff}{{\bold f}}
\newcommand{\bzero}{{\mathbf{0}}}
\newcommand{\ve}{{\varepsilon}}
\newcommand{\tvf}{{\tilde{\varphi}}}
\newcommand{\red}{\textcolor{red}} 
\definecolor{mygray}{gray}{0.6}
\newcommand{\be}{\begin{equation}}
\newcommand{\ee}{\end{equation}}
\newcommand{\bea}{\begin{eqnarray}}
\newcommand{\eea}{\end{eqnarray}}
\title[``Life after death" in singular ODEs]{``Life after death" in ordinary differential equations\\ with a non-Lipschitz singularity}
\author{Theodore D. Drivas}
\address{Department of Mathematics, Princeton University, Princeton, NJ 08544}
\email{tdrivas@math.princeton.edu}
\author{Alexei A. Mailybaev}
\address{ Instituto Nacional de Matem\'{a}tica Pura e Aplicada -- IMPA, Rio de Janeiro, Brazil}
\email{alexei@impa.br}
\date{today}
\begin{document}

\begin{abstract}
We consider a class of ordinary differential equations in $d$-dimensions featuring a non-Lipschitz singularity at the origin. Solutions of such systems exist globally and are unique up until the first time they hit the origin, $t = t_b$, which we term `blowup'. However, infinitely many solutions may exist for longer times. To study continuation past blowup, we introduce physically motivated regularizations: they consist of smoothing the vector field in a $\nu$--ball around the origin and then removing the regularization in the limit $\nu\to 0$. We  show that this limit can be understood using a certain autonomous dynamical system obtained by a solution-dependent renormalization procedure. This procedure maps the pre-blowup dynamics, $t < t_b$, to the solution ending at infinitely large renormalized time. In particular, the asymptotic behavior as $t \nearrow t_b$ is described by an attractor. The post-blowup dynamics, $t > t_b$, is mapped to a different renormalized solution starting infinitely far in the past. Consequently, it is associated with another attractor. The $\nu$-regularization establishes a relation between these two different ``lives'' of the renormalized system. We prove that, in some generic situations, this procedure selects a unique global solution (or a family of solutions), which does not depend on the details of the regularization. We provide concrete examples and argue that these situations are qualitatively similar to post-blowup scenarios observed in infinite-dimensional models of turbulence.  
\end{abstract}


\maketitle

\section{Introduction}

Non-Lipschitz singularities in differential equations arise naturally in numerous physical applications. For example, such singularities are the collision points in the N-body problem~\cite{diacu1992singularities}. {Non-Lipschitz singularities, which are reached in finite time, are} wide spread in partial differential equations, where they are often termed as blowup~\cite{eggers2009role}. A classical example is shock formation in conservation laws and ideal compressible fluid systems, where the velocity fields dynamically form jump discontinuities \cite{dafermos2010hyperbolic}.  Another important example arises in the setting of incompressible fluid dynamics: the celebrated Kolmogorov (K41) theory stipulates that an ``ideal" turbulent flow field is only 1/3--H\"{o}lder continuous~\cite{kolmogorov1941local,frisch1995turbulence}. 

A basic understanding of such non-smooth systems can be obtained from the textbook example
\be\label{1dmodel}
{\dot{x} = x^{1/3},} \qquad x(t_0)= x_0.
\ee
{This is a particular case of the equation $\dot{x} = {\rm sgn}(x)|x|^\alpha$ for $\alpha = 1/3$.} The exact solution of \eqref{1dmodel} for $t \ge t_0$ is easily obtained by separation of variables as
\be\label{exactSol}
x(t) = x_0 \left[1 + \frac{2}{3} |x_0|^{-2/3}(t-t_0)\right]^{3/2},\qquad x_0 \ne 0,
\ee
which is the unique solution for $x_0 \ne 0$. The right-hand side of (\ref{1dmodel}) is not Lipschitz continuous at $x_0 = 0$ and, as a result, the solution is non-unique: 
\be\label{exactSolb}
x(t) = \pm \left[\frac{2}{3} (t-t_0)\right]^{3/2}, \qquad x_0 = 0.
\ee
We remark that by Kneser's theorem, whenever a solution of the initial value problem is non-unique, then there are a continuous infinity of them (see  \S II.4 of \cite{hartman2002ordinary}). Such solutions are obtained in our case combining the trivial evolution $x(t) \equiv 0$ in an arbitrary interval $t_0 \le t \le t_1$ with any of nontrivial solutions \eqref{exactSolb}, where $t_0$ is replaced by $t_1$. This simple example has deep physical significance within the Richardson picture of particle separation by a turbulent flow~\cite{richardson1926atmospheric}. In fact, equation \eqref{1dmodel} with $x$ denoting the separation between two particles can serve as a toy model for Richardson dispersion  in an ``ideal" K41 velocity field. 
The solution \eqref{exactSolb} implies that particles starting at exactly the same point $x_0=0$ can split and reach finite distances apart {in finite time~\cite{falkovich2001particles};} related phenomena have been addressed in numerous numerical studies, e.g., \cite{sawford2001turbulent,eyink2011stochastic,bitane2013geometry,benveniste2014asymptotic}.

Due to their ubiquitous nature, it is important to develop an understanding of such non-smooth systems, when the classical theory of differential equations fails to provide the unique solutions.  A particularly important question is whether or not there is a natural way to continue solutions past a blowup uniquely (or at least, to provide a non-trivial restriction).
An example where this is accomplished for ordinary differential equations is the theory of $N$-body problem. There, binary (and in certain special cases three and four body) collisions can be regularized through singularity unfolding on the so-called collision manifold~\cite{diacu1992singularities,martinez2000degree,bakker2013understanding}, such that nearby solutions have continuity with respect to initial conditions.  
In partial differential equations, generalized descriptions (such as weak or distributional solutions) become necessary after blowup occurs. Weak solutions are generally highly non-unique and one must again consider the question of selection.
An example where this approach saw great success is in the theory of one-dimensional conservation laws, where entropy conditions can be used to select a \emph{unique} {continuation as an entropy--producing weak (shock) solution,} thereby establishing a predictive theory in that setting \cite{dafermos2010hyperbolic}. Moreover, these entropy conditions are imposed by the vanishing viscosity limit~\cite{lax1973hyperbolic}, which arises as a physically natural regularization of the inviscid problem. In the presence of shock waves, singularities in the characteristic equation lead to non-unique solutions considered backward in time, which is related to the dissipative anomaly of discontinuous solutions~\cite{eyink2015spontaneous}.
 The situation is more complex for higher dimensional conservation laws, where analogous strong results are not generally known.  In three-dimensional incompressible Euler equations, one natural analogue of entropy conditions -- that the solutions dissipate energy -- is now actually known to fail as a selection principle for weak solutions~\cite{de2013dissipative,buckmaster2017onsager,isett2017nonuniqueness}. The question remains as to whether any physical criteria, such as the vanishing viscosity limit, may serve to select unique solutions. Some constructive suggestions in this direction may be inferred from simplified turbulence models~\cite{mailybaev2016spontaneous,mailybaev2017toward,biferale2018rayleigh}. 

Motivated by these ideas, the goal of the present paper is to investigate fundamental constraints on possible ``selected'' solutions, when the non-uniqueness is caused by an isolated non-Lipschitz singularity.  
Our study exploits a class of ordinary differential equations described in Section~\ref{model}, which involve vector fields which are smooth away from the origin. At the origin, the equations are merely H\"{o}lder continuous (providing a cartoon of fields arising in fluid dynamics problems) or even unbounded (mimicking collisional problems for particles with Newtonian potentials). In generic situations, solutions enter the origin in finite time, which we call `blowup'. Since infinite number of solutions also start at the origin, continuation past the blowup is strongly non-unique. 

In these models we uncover some universal characteristics of non-uniqueness in the context of a ``physically relevant'' choice. To accomplish this, we introduce a set of solution-dependent renormalized phase-time variables. In this new coordinate system, approaching and leaving the singularity takes infinite ``time", thereby representing the solution as two different infinitely long evolutions of a renormalized   autonomous system. Section~\ref{sec3} shows how the first evolution, which is governed by one attractor of the renormalized system, determines the universal asymptotic behavior before the blowup. 
To study continuation after blowup, when the uniqueness is typically lost, we introduce in Section~\ref{sec4_post} a regularization scheme, which smooths the vector field inside a small $\nu$--sphere surrounding the origin. Then we consider the limit $\nu \to 0$, called the inviscid limit by the analogy with the smoothing effect of viscosity in fluid equations. {This regularization defines a connection between the two infinitely long renormalized evolutions before and after blowup.} For a wide class  of regularizations, we show that the limiting solutions must leave the origin immediately upon arrival along a trajectory associated with a different attractor of the renormalized system. For example, in the case of a fixed-point attractor studied in Section~\ref{sec5_fp}, this yields a globally unique solution to the problem. A remarkable property of this uniquely selected solution is that it is not sensitive to a particular choice of regularization, i.e. the ``physically relevant'' choice is foreseen by a singular differential equation. In a different situation, when the attractor is a periodic limit cycle studied in Section~\ref{sec6_lc}, such solutions are still non-unique. However, the regularization imposes a highly non-trivial constraint on possible continuations that drastically reduces possible future histories {irrespectively of} the choice of viscous regularization. 

As a consequence, we establish a connection between the theory of attractors in autonomous dynamical systems with the problem of non-uniqueness and continuation {at the non-Lipschitz singularity. It} shows that in a generic case the equation may impose a ``destiny" on a solution even after the uniqueness is lost. Our theorems are coupled with concrete examples illustrating the non-uniqueness and specific choices of solutions after blowup. We end with the Discussion section, where we mention similar phenomena observed recently in turbulence models.
 
\section{{Equations with an isolated non-Lipschitz singularity}}
\label{model}

We study the Cauchy problem for an autonomous system of ordinary differential equations 
\begin{align}\label{ODE}
\dot{\bx} &= {\bold f}(\bx), \quad \quad \bx(t_0)= \bx_0,
\end{align}
{where $\bx \in \mathbb{R}^d$ and the derivative is taken with respect to time $t \in \mathbb{R}$.}
We assume that ${\bold f}$ has an isolated singular (non-Lipschitz) point, which we assign to the origin,
$\bx=\bzero$. We will consider a self-similar form of the singularity, when
\begin{align}\label{ODEb}
\bff(\bx)=r^\alpha \bFF(\by), \qquad  r=|\bx|, \quad  \by= \bx/r
\end{align}
with $\alpha < 1$ and a smooth (continuously differentiable) function $\bFF(\by)$ on a unit sphere $\by \in S^{d-1}$. 
{The function ${\bold f}(\bx)$ is smooth everywhere except at the origin, where it is discontinuous for negative $\alpha$ and H\"{o}lder continuous, ${\bold f}\in C^\alpha(\mathbb{R}^d)$ for $\alpha\in (0,1)$. 
We will separate the radial and spherical components of this function as
\be\label{Fdecomp}
\bFF(\by) = F_r(\by) \by+\bFF_s(\by)
\ee
with $F_r(\by) = \bFF(\by)\cdot \by$ and $\bFF_s$ defined implicitly by \eqref{Fdecomp}. The models \eqref{ODEb} can be thought of as multi-$d$ generalizations of the toy model \eqref{1dmodel} for $\alpha = 1/3$; one can also think of collisions in the N-body problem for $\alpha = -2$, etc. }

Notice that the exponent $\alpha$ {in (\ref{ODE})--(\ref{ODEb})} can be reduced to zero by changing the variables as $\bx_{\textrm{new}} = r^{-\alpha}\bx$, which yields
\begin{align}\label{ODEex}
\dot{\bx}_{\textrm{new}} &= {\bold F}_{\textrm{new}}(\by), \qquad
\bFF_{\textrm{new}}(\by) = (1-\alpha) F_r(\by)\by+\bFF_s(\by), 
\quad r_{\textrm{new}} = |{\bx}_{\textrm{new}}|,
\quad \by = {\bx}_{\textrm{new}}/r_{\textrm{new}}.
\end{align}
The inverse transformation takes the form $\bx = r_{\textrm{new}}^{\alpha/(1-\alpha)}\bx_{\textrm{new}}$, i.e., the origin and infinity in the original variables $\bx$ are mapped, respectively, to the origin and infinity in $\bx_{\textrm{new}}$ for $\alpha < 1$. {Note that the transformation $\bx_{\textrm{new}} = r^{-\alpha}\bx$ is not smooth at the origin, for which reason we will keep the expression (\ref{ODEb}) with a general value of $\alpha < 1$.}

{For $\alpha \in (0,1)$, the function $\bff(\bx)$ is continuous and bounded by $|\bff(\bx)| < r^\alpha \max \bFF(\by)$, where the maximum is taken on the unit sphere $\by \in S^{d-1}$. Hence, the Cauchy problem (\ref{ODE}) possesses a solution globally in time for any initial condition~\cite{hartman2002ordinary}. 
This property extends to any $\alpha < 1$ due to the equivalence imposed by (\ref{ODEex}).} However, the solutions that pass through the singularity at the origin, are not necessarily unique. 

Let us denote by $t_b$ the maximal time such that $r(t) \ne 0$ for $t \in [t_0,t_b)$. This provides the largest time interval, where the uniqueness of the solution is guaranteed due to the Lipschitz continuity. In particular, we have the global uniqueness in the case of $t_b = \infty$. When $t_b<\infty$, the solution reaches the singularity at the origin in finite time, $\lim_{t\to t_b}r(t) =  0$, and we call this scenario the \textit{finite-time blowup}. This definition can be related to collisions or, alternatively, can be motivated by the blowup concept for partial differential equations, where it is linked to the breakdown of Lipschitz continuity. Recall, for example, that solutions of the inviscid Burgers equation have the local form $u(x) \propto -x^{1/3}$ at the blowup time~\cite{pomeau2008wave}. In many physical systems~\cite{eggers2009role,mailybaev2012}, the blowup has a self-similar asymptotic form justifying our choice of the self-similar ``toy-model'' vector field (\ref{ODEb}). 

So long as $r(t) > 0$, we can define the logarithmic radial coordinate as $z(t) = \ln r(t)$. The original solution is then expressed as
\begin{align} \label{xsol}
\bx(t) = e^{{z(t)}} {\by(t)}.
\end{align}
Equations for $z(t)$ and $\by(t)$ are obtained directly from (\ref{ODE}) and read:
\begin{align}\label{eq1}
\dot{\by}&= {e^{-(1-\alpha)z} \bFF_s(\by),}
\quad \by(t_0) = \by_0,
\\ \label{eq2}
\dot{z} &=e^{-(1-\alpha) z} F_r(\by),  \quad\ z(t_0) = z_0.
\end{align}
where the initial conditions are given by $\by_0 = \bx_0/|\bx_0|$ and $z_0 = \ln|\bx_0|$.


\section{Pre-Blowup Dynamics} 
\label{sec3}

For $t < t_b$, we introduce the a new temporal variable $s = s(t)$ defined by
\begin{align}\label{eq_s_of_t}
s(t) = \int_{t_0}^t e^{-(1-\alpha)z(t')} dt'.
\end{align}
The map $s(t)$ is monotonically increasing with $s(t_0) = 0$, and let us denote $s_b = \lim_{t \to t_b} s(t)$. Hence, the inverse monotonically increasing function $t = \tilt(s)$ exists with $\tilde{t}(0) = t_0$ and $t_b = \lim_{s \to s_b} \tilde{t}(s)$. We denote $\tby(s) :=\by(\tilt(s))$ and $\tz(s) :=z(\tilt(s))$; here and below we use tildes to denote  functions of the new time variable $s$. By the inverse function theorem, the derivative of $\tilt(s)$, obtained from (\ref{eq_s_of_t}), can be written in the form $d\tilt/ds = e^{(1-\alpha)\tz(s)}$ and  integrated as
\begin{align}\label{eq_t_of_s}
\tilt(s) = t_0+\int_{0}^s e^{(1-\alpha)\tz(s')} ds'.
\end{align}

Using $s(t)$ as the new temporal variable in system (\ref{eq1})--(\ref{eq2}), a simple computation yields the renormalized system in the form 
\begin{align} \label{yEqn}
d\tby/ds &= \bFF_s( {\tby}), \quad \quad \tby(0)=\by_0, \\  \label{zEqn}
d\tz/ds&= F_r(\tby),  \quad \quad \ \tz(0)=z_0.
\end{align}
In this system, the first equation (\ref{yEqn}) is uncoupled, and the second equation (\ref{zEqn}) is integrated as
\begin{align} 
\label{zEqnInt}
\tz(s) = z_0+\int_0^{s} F_r(\tby(s'))ds'.
\end{align}
Since the functions $\bFF_s$ and $F_r$ are continuous on the unit sphere $S^{d-1}$ and therefore bounded, solutions $\tby(s)$ and $\tz(s)$ exist globally in renormalized time and are unique.  

Let us show that $s_b = \infty$. In the case of blowup, $t_b < \infty$, we have $z(t) = \ln r(t) \to -\infty$ as $t \to t_b$. Since the function $F_r$ is bounded, the corresponding behavior $\tz(s) \to -\infty$ as $s \to s_b$ in (\ref{zEqnInt}) yields $s_b = \infty$. Therefore, using $\tilde{t}(s)$ from (\ref{eq_t_of_s}), one has
\be\label{timeblowup}
t_b =  \lim_{s\to \infty}\tilt(s)  = t_0+\int_{0}^\infty e^{(1-\alpha)\tz(s')} ds'.
\ee 
The same conclusion can be drawn in the case of no blowup, $t_b = \infty$, because $\tilde{t}(s)$ is finite for any finite value of $s$.

The solution $\bx(t)$ of the original system (\ref{ODE}) for $t \in [t_0,t_b)$ can be recovered from the global histories $\tby(s)$ and $\tz(s)$ with $s= {s}(t)$  given by (\ref{eq_s_of_t}) through the transformations
\begin{align}\label{xtrans}
\bx(t) = \tilde{\bx}({s} (t)), \qquad \tilde{\bx}(s)= e^{{\tilde{z}(s)}} {\tilde{\by}(s)}.
\end{align}
We see that the key property of the renormalization is to extend the blowup time (if it exists) to infinity, while maintaining the equations in autonomous form (\ref{yEqn})--(\ref{zEqn}).

Define the following upper and lower renormalized-time averages:
\begin{align}\label{eq_prop_1}
  \underline{ F_r } =  \liminf_{s \to \infty}   \frac{1}{s}\int_0^s F_r(\tby(s'))ds',
  \qquad 
 \overline{ F_r } =  \limsup_{s \to \infty}   \frac{1}{s}\int_0^s F_r(\tby(s'))ds'.
\end{align}
Recall that these averages are finite because $F_r$ is bounded. 

\begin{prop}\label{prop:infRenormTime}
The following conditions on the averages \eqref{eq_prop_1} characterize blowup:\\
(a) If $\underline{ F_r }  > 0$, then $t_b=\infty$ and $\lim_{t\to\infty}r(t)= \infty$. \\[3pt]
(b)  If  $\overline{ F_r }  < 0$, then the solution $\bx(t)$  blows up at finite time $t_b < \infty$, i.e.,  $\lim_{t\to t_b}r(t)= 0$. 
\end{prop}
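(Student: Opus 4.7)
The plan is to translate the hypotheses on $\underline{F_r}$ and $\overline{F_r}$ into linear growth or decay bounds on $\tilde{z}(s)$, then plug these into the blowup-time formula \eqref{timeblowup} to decide whether the integral converges or diverges, and correspondingly identify the behavior of $r(t) = e^{\tilde{z}(s(t))}$ as $t \to t_b$. The two parts are symmetric in structure, so I would handle them in parallel, differing only in the direction of the inequality.

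For part (a), suppose $\underline{F_r} > 0$ and fix any $\varepsilon \in (0, \underline{F_r})$. By the definition of $\liminf$, there exists $S_\varepsilon$ such that for all $s \ge S_\varepsilon$,
\begin{equation*}
\frac{1}{s}\int_0^s F_r(\tby(s'))\,ds' \ge \underline{F_r} - \varepsilon > 0 .
\end{equation*}
Combined with the integral identity \eqref{zEqnInt}, this yields $\tilde{z}(s) \ge z_0 + (\underline{F_r}-\varepsilon)\, s$ for $s \ge S_\varepsilon$. Since $\alpha < 1$, the integrand $e^{(1-\alpha)\tilde{z}(s)}$ grows at least exponentially, so the integral in \eqref{timeblowup} diverges, giving $t_b = \infty$. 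Moreover $r(t) = e^{\tilde{z}(s(t))} \to \infty$ as $s \to \infty$, which by monotonicity of $s(t)$ is the same as $t \to \infty$.

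For part (b), suppose $\overline{F_r} < 0$ and fix $\varepsilon \in (0, -\overline{F_r})$. By the definition of $\limsup$, there exists $S_\varepsilon'$ such that $\tilde{z}(s) \le z_0 + (\overline{F_r}+\varepsilon)\,s$ for all $s \ge S_\varepsilon'$, with $\overline{F_r}+\varepsilon < 0$. Then $e^{(1-\alpha)\tilde{z}(s)}$ decays exponentially as $s \to \infty$ and the integral in \eqref{timeblowup} converges, so $t_b < \infty$. The exponential decay of $\tilde{z}(s) \to -\infty$ simultaneously gives $r(t) = e^{\tilde{z}(s(t))} \to 0$ as $t \to t_b$, which is the defining property of finite-time blowup.

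The argument is mostly bookkeeping — the only subtlety worth flagging is that one must take the liminf/limsup bound with a strict margin $\varepsilon$ rather than using the averages directly, since the pointwise averages themselves need not stabilize; however this is what gives the eventual \emph{linear} lower or upper bound on $\tilde{z}(s)$, which is precisely what is needed to force divergence/convergence of the exponential integral. No genuine analytic obstruction arises because global existence and boundedness of $F_r \circ \tby$ are already established before the statement.
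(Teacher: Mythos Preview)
Your proof is correct and follows essentially the same approach as the paper: use the definition of $\liminf$/$\limsup$ with a strict margin $\varepsilon$ to obtain an eventual linear lower/upper bound on $\tilde z(s)$ from \eqref{zEqnInt}, and then feed this into the integral formula \eqref{timeblowup} to force divergence or convergence. The only cosmetic difference is that the paper further enlarges $s_\varepsilon$ so that $|z_0| < \varepsilon s_\varepsilon/2$, thereby absorbing the constant $z_0$ into the margin, whereas you keep $z_0$ as an explicit additive constant; both variants give the same exponential growth/decay of the integrand.
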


\begin{proof} 
By definition, for any $\varepsilon > 0$ there exists an $s_\ve>0$ such that 
\be\label{ineq1}
\underline{ F_r } -\varepsilon/2<  \frac{1}{s}\int_0^s F_r(\tby(s'))ds' 
< \overline{ F_r } +\varepsilon/2 \quad
\textrm{for} \quad
s > s_\varepsilon.
\ee
The value of $s_\ve$ can be chosen sufficiently large such that 
\be\label{ineq2}
|z_0| < \varepsilon s_\varepsilon/2. 
\ee
Using (\ref{ineq1})--(\ref{ineq2}) in expression  \eqref{zEqnInt}, we find
\be\label{zbnds}
\left(\underline{ F_r } -\varepsilon\right)s < \tilde{z}(s) 
< \left(\overline{ F_r } +\varepsilon\right)s
\quad
\textrm{for} \quad
s > s_\varepsilon.
\ee

\noindent (a)  If $\underline{ F_r }  > 0$, then the first inequality in (\ref{zbnds}) with $\ve = \underline{ F_r }/2 $ yields 
\be\label{ineq3}
\tilde{z}(s) > \underline{ F_r }\,s/2 
\quad
\textrm{for} \quad
s > s_\varepsilon.
\ee
Using this estimate in (\ref{eq_t_of_s}), we obtain the lower bound: 
\begin{align}
\label{eqt_int}
\tilt(s) > \tilt(s_\varepsilon)+\int_{s_\varepsilon}^s e^{(1-\alpha)\underline{ F_r }\,s'/2 }ds'
\quad
\textrm{for} \quad
s > s_\varepsilon.
\end{align}
Since $\alpha < 1$ and $\underline{ F_r }  > 0$, the integral in (\ref{eqt_int}) diverges as $s \to \infty$ and  we have $t_b=\lim_{s\to\infty} \tilt(s)=\infty$. This proves that the solution $\bx(t)$ does not blow up in finite time.  From Eq.~\eqref{ineq3}, it follows that $\lim_{t\to\infty} r(t)=\lim_{s\to\infty} e^{\tilde{z}(s)} =\infty$.
\vspace{2mm}

\noindent (b) If $\overline{ F_r } < 0$, a similar argument with $\varepsilon = -\overline{ F_r }/2> 0$ and the second inequality in  (\ref{zbnds})  yields 
\be\label{ineq5}
\tilde{z}(s) < \overline{ F_r }\,s/2
\quad
\textrm{for} \quad
s > s_\varepsilon. 
\ee
This guarantees that $\lim_{s\to \infty}\tilde{z}(s) = -\infty$ and the integral in (\ref{eq_t_of_s}) converges to a finite value $t_b = \lim_{s\to \infty}\tilt(s) < \infty$. Thus, the solution reaches the origin $\lim_{t\to t_b} r(t) =0$ in a finite time $t_b$.  
\end{proof}

By Proposition \ref{prop:infRenormTime}, the \emph{blowup is determined by the solutions of the renormalized system} as $s\to\infty$. Thus, it is natural to associate the blowup with dynamical attractors. 
An attractor of the autonomous dynamical system (\ref{yEqn}) is a nonempty, compact, invariant set $\mathcal{A}$ that has a neighborhood $U_0$ with the property $\cap_{s\ge 0}U_s = \cA$, where $U_s$ is the neighborhood $U_0$ transported by the system flux at time $s > 0$, see, e.g.,~\cite{eckmann1985ergodic,hurley1982attractors}. The basin of attraction, $\mathcal{B}(\cA)$, is the set of all points that approach $\mathcal{A}$ asymptotically under $s \to \infty$. According to Proposition \ref{prop:infRenormTime}, if $\by_0 \in \mathcal{B}(\mathcal{A})$, then the blowup dynamics is controlled by the average of the function $F_r$ on the attractor. 

\begin{defin} 
\label{def1}
We say that the attractor of the system \eqref{yEqn} is \textit{focusing} if $\overline{ F_r }  < 0$ for any $\by_0 \in \mathcal{B}(\mathcal{A})$. Similarly, we call the attractor \textit{defocusing} if $\underline{ F_r }  > 0$ for any $\by_0 \in \mathcal{B}(\mathcal{A})$. 
\end{defin}

For simple attractors like an asymptotically stable fixed point or a limit cycle, there exists an average value, $\langle F_r \rangle = \underline{ F_r } = \overline{ F_r }$, which is independent of the initial condition $\by_0 \in \mathcal{B}(\mathcal{A})$. For example, if $\cA$ is a fixed point $\{\by_*\}$, then $\langle F_r\rangle = F_r(\by_*)$.  If $\cA$  is a limit cycle, then $\langle F_r\rangle$  is the average of $F_r(\tby(s))$ over one period of the attractor.  
The average $\langle F_r \rangle$ can also exist in more complex situations, in the case of  quasi-periodic and chaotic attractors under proper ergodicity assumptions.   
For example for sufficiently mixing flows, $\langle F_r\rangle$  exists and is computed as an average of $F_r(\tby(s))$ over the attractor, with respect to the SRB measure; {see~\cite{bowen1975ergodic,eckmann1985ergodic} for precise definitions.}


With the well-defined average value, the property of the attractor to be focusing ($\langle F_r \rangle < 0$) or defocusing ($\langle F_r \rangle > 0$) becomes generic. The only exception is given by the degeneracy condition $\langle F_r \rangle = 0$.
The immediate consequence of Proposition \ref{prop:infRenormTime} is:
 
\begin{thm}\label{th:EnterAttractor}
If $\by_0 \in \mathcal{B}(\mathcal{A})$ for a defocusing attractor $\cA$ of system  (\ref{yEqn}), then $t_b=\infty$ and $\lim_{t\to\infty}r(t)= \infty$. 
If $\by_0 \in \mathcal{B}(\mathcal{A})$ for a focusing attractor $\cA$, then the solution $\bx(t)$ blows up in finite time and the attractor describes the asymptotics of the spherical variables:  $\lim_{t\to t_b}\mathrm{dist}(\by(t),\cA) = 0$.
\end{thm}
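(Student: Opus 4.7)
The plan is to observe that Theorem~\ref{th:EnterAttractor} is essentially a direct corollary of Proposition~\ref{prop:infRenormTime} once we translate between the two time variables and invoke the definition of an attractor's basin. Both parts reduce to verifying the quantitative hypotheses of Proposition~\ref{prop:infRenormTime} from the qualitative assumption $\by_0 \in \mathcal{B}(\cA)$, and then transporting the conclusion back through the renormalization map $t \mapsto s(t)$.

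For the defocusing case, the hypothesis $\by_0 \in \mathcal{B}(\cA)$ combined with Definition~\ref{def1} yields directly $\underline{F_r} > 0$. Proposition~\ref{prop:infRenormTime}(a) then immediately delivers $t_b = \infty$ and $\lim_{t \to \infty} r(t) = \infty$, with no further work required.

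For the focusing case, Definition~\ref{def1} gives $\overline{F_r} < 0$, and Proposition~\ref{prop:infRenormTime}(b) yields finite-time blowup $t_b < \infty$ with $r(t) \to 0$. The additional content of the theorem is the statement $\mathrm{dist}(\by(t), \cA) \to 0$ as $t \nearrow t_b$. To establish this I would use the identity $\by(t) = \tby(s(t))$ from (\ref{xtrans}). The key observation, which was established inside the proof of Proposition~\ref{prop:infRenormTime} (namely $s_b = \infty$ whenever $z(t) \to -\infty$), is that the blowup time $t_b$ in the original variable corresponds to $s \to \infty$ in the renormalized variable. Since $\by_0$ lies in $\mathcal{B}(\cA)$ and $\tby(s)$ solves the autonomous system (\ref{yEqn}) with this initial condition, the very definition of basin of attraction gives $\lim_{s\to\infty} \mathrm{dist}(\tby(s), \cA) = 0$. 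Composing with the continuous, monotone time change,
\begin{equation*}
\lim_{t \nearrow t_b} \mathrm{dist}(\by(t), \cA) \;=\; \lim_{t \nearrow t_b} \mathrm{dist}(\tby(s(t)), \cA) \;=\; \lim_{s \to \infty} \mathrm{dist}(\tby(s), \cA) \;=\; 0.
\end{equation*}

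There is no real obstacle here; the work has already been done in Proposition~\ref{prop:infRenormTime} and in the setup of the renormalized time. The only point that warrants explicit mention in the proof is the identification $t \nearrow t_b \Longleftrightarrow s \to \infty$, which is what allows the basin property (an asymptotic statement in $s$) to be read as an asymptotic statement in $t$.
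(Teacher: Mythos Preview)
Your proposal is correct and matches the paper's approach exactly: the paper does not even write a separate proof, stating only that the theorem is ``the immediate consequence of Proposition~\ref{prop:infRenormTime}.'' You have simply spelled out the implicit reasoning (Definition~\ref{def1} feeds the hypotheses of Proposition~\ref{prop:infRenormTime}, and the fact $s_b=\infty$ together with $\by_0\in\mathcal{B}(\cA)$ gives the spherical asymptotics), which is precisely the intended argument.
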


 This result provides a natural tool for characterizing and classifying possible types of blowup. In general, when $\langle F_r\rangle < 0$, the integrals  (\ref{eq_t_of_s}) and  (\ref{zEqnInt}) yield the estimates
\begin{align} 
\label{zEqnIntB}
\tz(s) \sim \langle F_r\rangle s,\quad
t_b-\tilt(s) \sim \exp\left[(1-\alpha)\langle F_r\rangle s\right]
\quad \textrm{for} \quad
s \to \infty,
\end{align}
which lead to the power-law asymptotic for $r = e^z$ as
\begin{align} 
\label{zEqnIntD}
r(t) \sim (t_b-t)^{\frac{1}{1-\alpha}}
\quad \textrm{for} \quad
t \to t_b.
\end{align}

We remark that the discussed scenarios of blowup in the low-dimensional singular system (\ref{ODE})--(\ref{ODEb}) closely resemble the blowup dynamics in partial differential equations~\cite{pomeau2008wave,eggers2009role} and infinite-dimensional turbulence models~\cite{dombre1998intermittency,mailybaev2012,mailybaev2012c}, where the asymptotic blowup dynamics is associated to the attractor of a renormalized system. This justifies the interpretation of (\ref{ODE})--(\ref{ODEb}) as a toy-model for studying evolution before and after the blowup.

To conclude this section, we give two  concrete examples.

\begin{example}\label{Ex0}\normalfont
In the simplest case an attractor $\cA$ is a fixed point, say $\cA =\{\by_*\}$ with $\bFF_s(\by_*) = \mathbf{0}$. Assuming that $\by_0\in \mathcal{B}(\{\by_*\})$, we find the average value $\langle F_r\rangle = F_r(\by_*)$.  A negative value of $F_r(\by_*)$ guarantees the finite-time blowup. For $\bx_0 = r_0\by_*$ and $r_0 = e^{z_0}$, the fixed-point solution $\tby(s) \equiv \by_*$ with expressions (\ref{zEqnInt}) and (\ref{eq_t_of_s}) yield 
\begin{align} \label{eqSol1}
\tz(s) = z_0+F_r(\by_*)s,\quad
\tilt(s) = t_b-\frac{\exp\left[(1-\alpha)(z_0+F_r(\by_*)s)\right]}{(\alpha-1)F_r(\by_*)},
\end{align}
where the blowup time \eqref{timeblowup} is given by
\begin{align} \label{eqFPgen2}
t_b 
= {t_0+\frac{\exp[(1-\alpha)z_0]}{(\alpha-1)F_r(\by_*)}}
= t_0+\frac{r_0^{1-\alpha}}{(\alpha-1)F_r(\by_*)}.
\end{align}
Recall that  $(\alpha-1)F_r(\by_*) > 0$ for negative $F_r(\by_*)$ and $\alpha < 1$. The second expression in \eqref{eqSol1} can be used to solve the equation $t = \tilt(s)$ with respect to $s = s(t)$. 
Then the solution $\bx(t) = r(t)\by(t)$ with $\by(t) \equiv \by_*$ and $r(t) = e^{\tz(s(t))}$ is obtained in the form
\begin{align} \label{eqFPgen}
\bx(t)= r(t)\by_*,\quad
r(t) = \left[(\alpha-1)F_r(\by_*)(t_b-t)\right]^{\frac{1}{1-\alpha}}.
\end{align}
In the more general case, when $\by_0$ belongs to the basin of attraction of $\by_*$, expression (\ref{eqFPgen}) provides the asymptotic form of the solution before the blowup, as $t \to t_b$.
\end{example}

\begin{example}\label{Ex1}\normalfont
Consider a system of equations (\ref{ODE})--(\ref{ODEb}) for any $\alpha < 1$ and $\bFF(\by) = (F_1,F_2)$ with
	\begin{equation}
	F_1(\by) = y_1^2+y_1y_2+y_1y_2^2,\quad
	F_2(\by) = y_1y_2+y_2^2-y_1^2y_2.
	\label{eq2.1}
	\end{equation}
Phase diagram of this system is shown in Fig.~\ref{fig1}(a). For all initial conditions on the left half-plane $x_1 < 0$ and semi-axis $x_1 = 0$, $x_2 < 0$, solutions enter the singularity at the origin in finite time (blowup). On the other hand, all solutions in the upper half-place $x_2 > 0$ and semi-axis $x_1 > 0$, $x_2 = 0$ originate at the origin. The curves in the fourth quadrant $x_1 > 0$, $x_2 < 0$ never hit the singularity.

\begin{figure}[t]
\centering
\includegraphics[width=0.75\textwidth]{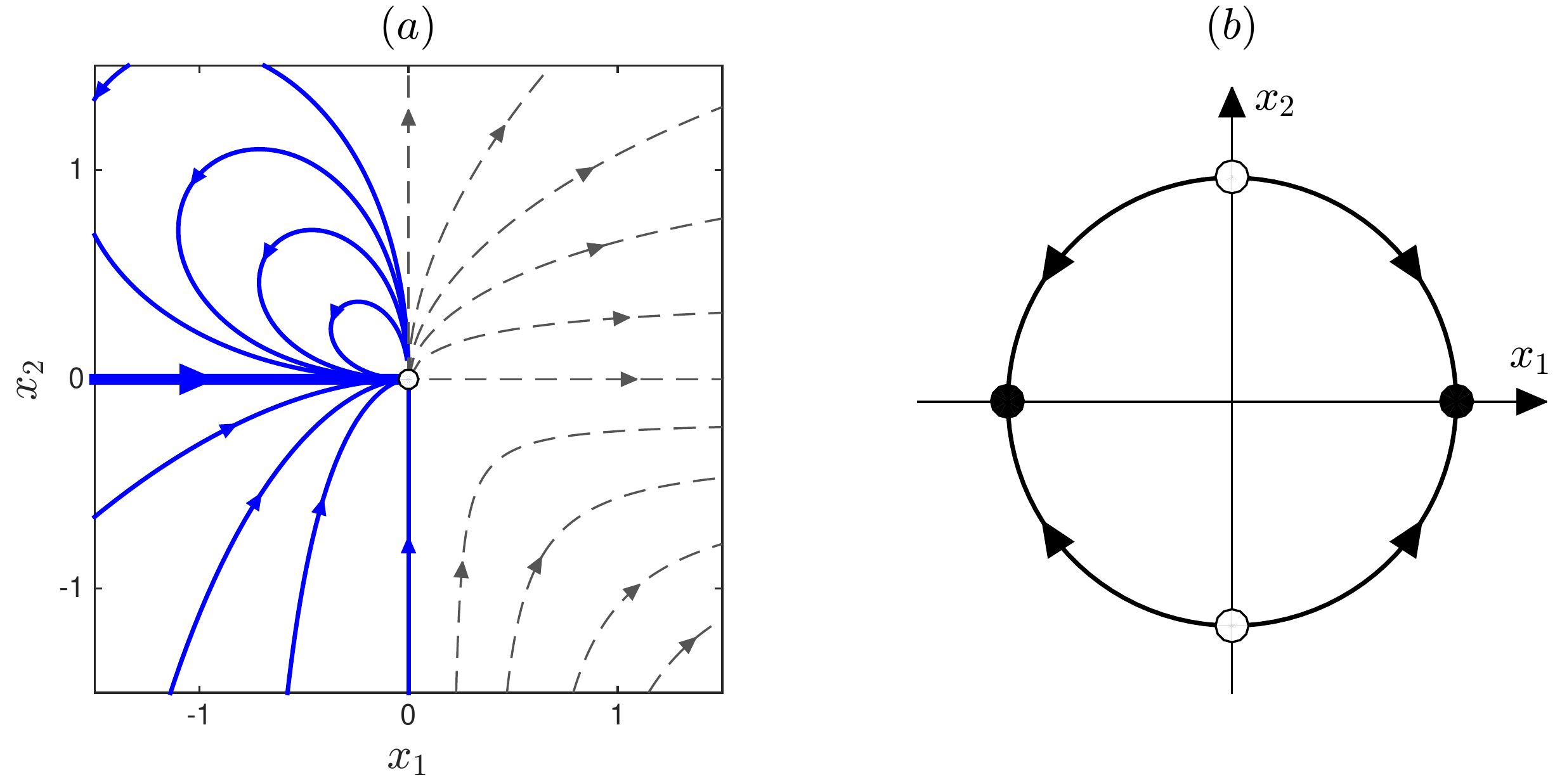}
\hspace{5mm}
\caption{(a) Vector field of Eqs.~(\ref{ODE})--(\ref{ODEb}) and (\ref{eq2.1}) with representative solutions. Curves entering the origin are colored \textcolor{blue}{blue}; points along them correspond to initial conditions which lead to `finite-time blowup'. 
(b) Phase portrait of the renormalized system on a circle. Attractors are shown by \textbf{black} circles and repellers by white circles.}
\label{fig1}
\end{figure}

We can express  $\by = (\cos\varphi,\sin\varphi)$, where the real variable $\varphi \in S^1$ describes the angular dynamics. Then the radial and circular components {of the vector field} are introduced as
	\begin{equation}
	{\left(\begin{array}{c} F_1 \\ F_2 \end{array}\right) 
	= \left(\begin{array}{c} y_1 \\ y_2 \end{array}\right) F_r
	+ \left(\begin{array}{c} -y_2 \\ y_1 \end{array}\right) F_\varphi,}\qquad
	F_r(\varphi) 
	= \sqrt{2}\sin \left(\varphi+\frac{\pi}{4}\right),\quad
	F_s(\varphi) 
	= {-\frac{\sin 2\varphi}{2}.}
	\label{eq2.3}
	\end{equation}
For the renormalized system (\ref{yEqn})--(\ref{zEqn}) we obtain 
	\begin{align} \label{phiEqn}
	d{\tvf}/ds = F_s( {\tvf}),  \quad
	d{\tz}/ds = F_r(\tvf),
	\end{align}
for $\tvf(s):= \varphi(\tilde{t}(s))$.
Dynamics of the equation for $\tvf$ in (\ref{phiEqn}) is very simple as it possess only two fixed-point attractors at $\tvf = 0$ and $\pi$, see Fig.~\ref{fig1}(b). The corresponding basins of attraction are separated by the two unstable fixed points at $\tvf = \pm\pi/2$. Since $F_r(0) = 1 > 0$ for the first attractor, it does not correspond to the blowup. The second attractor with $\tvf = \pi$ has $F_r(\pi) = -1 < 0$ with the basin of attraction $\pi/2 < \tvf < 3\pi/2$. Therefore, it describes the blowup from the initial condition at any point of the left half-plane $x_1 < 0$ (basin of attraction). This is confirmed in Fig.~\ref{fig1}(a) demonstrating the phase portrait of original system {\eqref{ODE}--\eqref{ODEb}.} All blue curves enter the origin in finite time.

The stable fixed-point defines the blowup solution (\ref{eqFPgen}), which in our example takes the form
\begin{align} \label{xsolFP}
\bx(t)=-
\begin{pmatrix}\left[(1-\alpha)(t_b-t)\right]^{\frac{1}{1-\alpha} }
\\
0
\end{pmatrix},\quad  0 \le t \le t_b.
\end{align}
By Theorem~\ref{th:EnterAttractor}, the solution (\ref{xsolFP}) is asymptotic for all solutions that end at the singularity from the left half-plane $x_1 < 0$; see the bold blue line in Fig.~\ref{fig1}(a).   These also exists a single solution corresponding to the unstable fixed point $\tilde{\varphi} = 3\pi/2$, see Fig.~\ref{fig1}(b). This solution is similarly found as
\begin{align} \label{xsolFPalt}
\bx(t)=-
\begin{pmatrix}0
\\
\left[(1-\alpha)(t_b-t)\right]^{\frac{1}{1-\alpha} }
\end{pmatrix},\quad  0 \le t \le t_b.
\end{align}
This solution  is non-generic as it corresponds to a zero measure set of initial conditions.
Thus, expression (\ref{xsolFP}) describes asymptotically the only generic blowup scenario in the system.   
\end{example}

\section{Post-Blowup Dynamics } \label{sec4_post}

Our system is not Lipschitz continuous at the origin and, hence, there can be multiple solutions starting at the singularity; see, for example Fig.~\ref{fig1}(a), where all solutions with $x_2 > 0$ originate at the singularity at finite time. This prevents one from uniquely defining the solution globally in time and motivates the study of regularized problems for the search of selection principles.

\subsection{$\nu$--Regularization and ``inviscid'' limit} \label{sec:nuReg}

 In this section,  we consider a class of regularized problems $\dot{\bx} = \bff^\nu(\bx)$ which have unique global solutions and provide a selection rule by taking the limit $\nu \to 0$ in which the regularization is removed. 
A priori, there are infinitely many different ways to regularize the vector field.   For example, one can take the convolution $\mathbf{f}^\nu =  G^\nu *\mathbf{f} $ with any smooth scaled mollifier $G^\nu$ which approximates the identity $G^\nu\to \delta$ as $\nu\to 0$.  
Often, a physical application determines a relevant regularization.
 
For analytical convenience,  we consider the family $\bff^\nu(\bx)$ obtained by smoothing the function $\bff(\bx)$ in Eq.~(\ref{ODEb}) inside a sphere of radius $\nu$ centered at the origin. Then the vector field is constructed by patching together the regularized and the original fields inside and outside the sphere. Due to the self-similar form of $\bff(\bx)$, it is convenient to define the regularization for all $\nu > 0$ with the same function $\bG(\bx)$ scaled properly inside the $\nu$-sphere.  More specifically, 
we say $\bx = \bx^\nu(t)$ is a solution of a \emph{$\nu$--regularized problem}  if it solves
\begin{align}\label{eqn:epsReg}
 \dot{\bx} = \bff^\nu(\bx) ,\quad   \bff^\nu(\bx):= 
 \begin{cases} 
	r^\alpha\bFF(\bx/r) & r > \nu;\\
	\nu^\alpha \bG(\bx/\nu) & r \leq \nu,
 \end{cases}\qquad 
  {\bx}(t_0)=\bx_0,
\end{align}
where the function $\bG(\bx)$ is designed so that $\bff^\nu(\bx)$ is continuously differentiable everywhere.  Such regularization is demonstrated schematically in Fig.~\ref{fig2}(a).
It is easy to see that the regularized solution $\bx^\nu(t)$ exists and is unique globally in time. 

\begin{figure}
\centering
\includegraphics[width=0.35\textwidth]{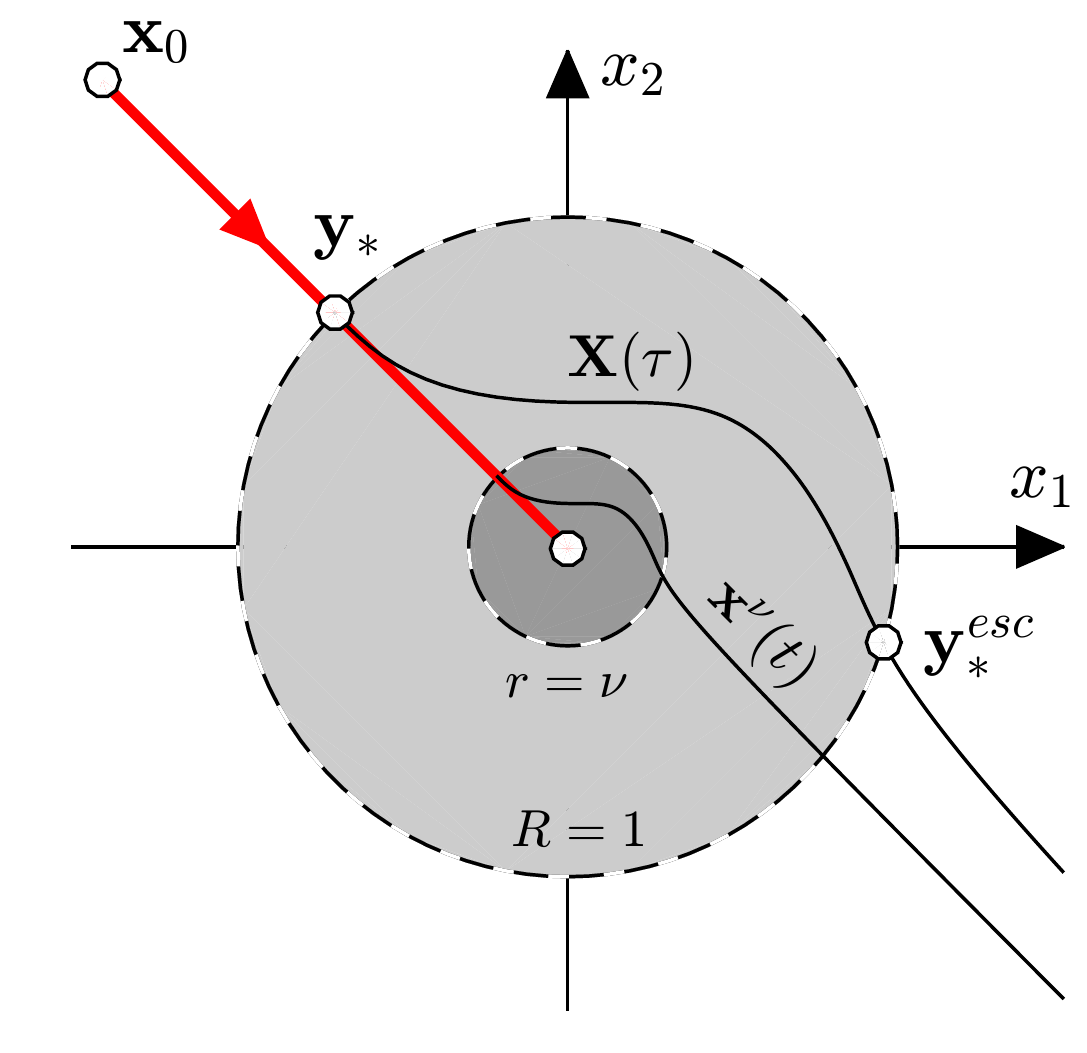}
\caption{
(a) Outline of the regularized system \eqref{eqn:epsReg}:  The \red{red} line corresponds to blowup solution for $\nu = 0$.  The \textbf{black} curve $\bx^{\nu}(t)$ shows the regularized solution with the equation modified in the \textbf{\textcolor{gray}{gray}} disk of small radius $\nu > 0$. The other \textbf{black} curve $\bX(\tau)$ is a solution of the rescaled system \eqref{ODErescale}, which corresponds to $\nu = 1$.
}
\label{fig2}
\end{figure}

There is some similarity between our regularization and the role of viscosity in fluid dynamics: both change the system at small scales, which are responsible to the blowup. Motivated by this analogy, the limit $\nu \to 0$ can be termed the \textit{inviscid limit}.
We now show that $\bx^\nu(t)$ converges (along subsequences) to a solution of the original singular system  \eqref{ODE}.  

\begin{thm}\label{convSub}
Given any initial condition $\bx^{\nu}(t_0) = \bx_0$ and any finite time interval $[t_0,t_1]$, there exists a subsequence $\{\nu_n\}_{n\geq 0}$  of solutions  $\{\bx^{\nu_n}(t)\}_{n\geq 0}$  of the $\nu$--regularized problem  \eqref{eqn:epsReg} with $\nu_n \to 0$ as $n\to\infty$, which converges uniformly to a limit 
\begin{equation}\label{limx}
\bx(t) = \lim_{n\to \infty}\bx^{\nu_n}(t)
\quad
\textrm{for} \quad
t \in [t_0,\,t_1].
\end{equation}
The limiting function $\bx(t)$ is a solution of the original system \eqref{ODE} with $\bx(t_0) = \bx_0$.
\end{thm}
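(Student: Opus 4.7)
The plan is a standard Arzel\`a--Ascoli compactness argument, with the real work concentrated in passing to the limit under the integral sign near the singular point. First, I would establish a uniform-in-$\nu$ bound on $|\bx^\nu(t)|$ on $[t_0,t_1]$. Outside the regularization ball, $|\bff^\nu(\bx)| \le r^\alpha \max_{S^{d-1}}|\bFF|$, and inside it, $|\bff^\nu(\bx)| \le \nu^\alpha \max_{|\bz|\le 1}|\bG(\bz)|$. For $\alpha \in (0,1)$ both combine into a uniform-in-$\nu$ estimate $|\bff^\nu(\bx)| \le C(1 + r^\alpha)$, and a Gr\"onwall-type inequality applied to $\frac{d}{dt}r^\nu \le |\dot\bx^\nu|$ yields $r^\nu(t) \le R$ on $[t_0,t_1]$ with $R$ independent of $\nu$. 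The case $\alpha \le 0$ is reduced to a bounded right-hand side via the self-similar change of variables \eqref{ODEex}, applied consistently to $\bff^\nu$.

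Given the radial bound, $|\dot\bx^\nu(t)|$ is bounded by some $M$ independent of $\nu$ on $[t_0,t_1]$, so the family $\{\bx^\nu\}$ is uniformly bounded and equi-Lipschitz. Arzel\`a--Ascoli then supplies a sequence $\nu_n\to 0$ and a continuous limit $\bx\colon[t_0,t_1]\to\mathbb{R}^d$ with $\bx^{\nu_n}\to\bx$ uniformly. To identify $\bx$ as a solution of \eqref{ODE}, I would pass to the limit in the integral form
\[
\bx^{\nu_n}(t) = \bx_0 + \int_{t_0}^t \bff^{\nu_n}(\bx^{\nu_n}(s))\,ds,
\]
the left-hand side converging to $\bx(t)$ by uniform convergence, the right-hand side by dominated convergence: the integrand is bounded by $M$, so it suffices to show pointwise a.e.\ convergence $\bff^{\nu_n}(\bx^{\nu_n}(s)) \to \bff(\bx(s))$, with $\bff$ extended continuously by $\bff(\bzero) = \bzero$ (valid for $\alpha \in (0,1)$, and the $\alpha \le 0$ case again inherited from the change of variables).

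The hard part will be this pointwise convergence at times $s$ where the limit $\bx(s)$ touches the singular point. When $\bx(s) \ne \bzero$, uniform convergence forces $|\bx^{\nu_n}(s)| > \nu_n$ for all large $n$, so $\bff^{\nu_n}(\bx^{\nu_n}(s)) = \bff(\bx^{\nu_n}(s)) \to \bff(\bx(s))$ by continuity of $\bff$ off the origin. When $\bx(s) = \bzero$, I would split on whether $\bx^{\nu_n}(s)$ lies inside or outside the regularization ball: in the former case $|\bff^{\nu_n}(\bx^{\nu_n}(s))| \le \nu_n^\alpha \max|\bG| \to 0$, and in the latter $|\bff^{\nu_n}(\bx^{\nu_n}(s))| \le |\bx^{\nu_n}(s)|^\alpha \max|\bFF| \to 0$ since $|\bx^{\nu_n}(s)|\to 0$. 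Both cases make the integrand tend to $\bzero = \bff(\bzero)$, and dominated convergence then yields $\bx(t) = \bx_0 + \int_{t_0}^t \bff(\bx(s))\,ds$, establishing that $\bx$ solves \eqref{ODE} with the required initial condition.
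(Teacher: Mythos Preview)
Your proposal is correct and follows essentially the same route as the paper: a uniform-in-$\nu$ radial bound via a differential inequality for $r^\nu$, equi-Lipschitz continuity, Arzel\`a--Ascoli, passage to the limit in the integral equation, and the self-similar change of variables \eqref{ODEex} to handle $\alpha \le 0$. The only cosmetic difference is that the paper passes to the limit by invoking uniform continuity of $(\bx,\nu)\mapsto\bff^\nu(\bx)$ on the compact set $\{|\bx|\le r_b\}\times[0,\nu_0]$, whereas your dominated-convergence argument with the inside/outside case split at $\bx(s)=\bzero$ is exactly the computation that verifies that continuity---so the two presentations are effectively identical.
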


\begin{proof}
Let us first consider the case $\alpha \in [0,\,1)$. Equation \eqref{eqn:epsReg} provides the estimate for the time derivative of the norm $r^{\nu}(t) = |\bx^{\nu}(t)|$ as
\begin{align}\label{eq1b}
\dot{r}^\nu & \le A\max(r^{\nu},\nu)^\alpha \le A(r^{\nu}+\nu)^\alpha,
\end{align}
where $A \ge 0$ is the maximum value of the norms {$|\bFF(\by)|$ for $\by \in S^{d-1}$ and $|\bG(\bx)|$ for $|\bx| \le 1$.} 
The extremal solution $r^{\nu}_{ext}(t)$ of differential inequality \eqref{eq1b} can be found as 
\begin{equation}
\label{eq1ext}
r^{\nu}_{ext}(t) = \left[A(1-\alpha)(t-t_0)+(r_0+\nu)^{1-\alpha}\right]^{1/(1-\alpha)}-\nu,
\end{equation}
where $r_0 = r^{\nu}(t_0) = |\bx_0|$. The function (\ref{eq1ext}) in the interval $t \in [t_0,t_1]$ attains its maximum at the final time $t_1$.
In particular, given the initial value $r_0$, a finite interval $[t_0,t_1]$ and fixing an arbitrary $\nu_0 > 0$, there is a ball of finite radius $r_b = \left[A(1-\alpha)(t_1-t_0)+(r_0+\nu_0)^{1-\alpha}\right]^{1/(1-\alpha)}$ such that
\begin{align}\label{eq1bExtra1}
r^\nu(t) & \le r_b 
\textrm{ \ for any \ }t\in[t_0,t_1],\ \nu \in (0,\,\nu_0]. 
\end{align}
In particular, solutions of \eqref{eqn:epsReg} are \emph{bounded uniformly in $\nu$}. 

We will now show that the family $\{\bx^\nu(t)\}_{\nu>0}$ is equicontinuous.   To see this, note that solutions of \eqref{eqn:epsReg} satisfy the integral form of the equation:
\begin{equation}\label{regIdent}
\bx^\nu(t)= \bx_0+\int_{t_0}^t \bff^\nu(\bx^\nu(t'))dt',\quad \nu > 0.
\end{equation}
Let $B$ be the maximum value of the norm $|\bff^{\nu}(\bx)|$ in the compact set $\{\bx:|\bx | \le r_b\}$ and $\nu \in [0,\,\nu_0]$. Then the relations (\ref{eq1bExtra1}) and (\ref{regIdent}) guarantee that
\begin{equation}\label{liptime}
|{\bx}^\nu(t)-{\bx}^\nu(t')|\leq  B |t-t'|
\textrm{ \ for any \ }t,t'\in[t_0,t_1],\ \nu \in (0,\,\nu_0]. 
\end{equation}
This inequality proves that the family of solutions $\{\bx^\nu(t)\}_{\nu>0}$ is \emph{equicontinuous}. Thus, by the Arzel\`{a}--Ascoli theorem (see, e.g. \cite{rudin1964principles}), there exists a subsequence $\nu_0,\nu_1,\nu_2,\ldots$ such that $\lim_{n\to \infty}\nu_n = 0$ and the corresponding solutions converge uniformly to the continuous function (\ref{limx}). Finally, since the function $\bff^\nu(\bx)$ is uniformly continuous in the compact set $|\bx| \le r_b$, $\nu \in [0,\,\nu_0]$, one can pass to the limit in Eq. \eqref{regIdent}, which yields 
  \begin{equation}
\bx(t)= \bx_0+\int_{t_0}^t \bff(\bx(t'))dt'.
\end{equation}
This implies that the limiting solution $\bx(t)$ solves the original equation \eqref{ODE}.

In the case of negative $\alpha$ we can use the reduction similar to (\ref{ODEex}). One can show that the transformation 
{\begin{equation}\label{eq_corrA1}
\bx_{\textrm{new}} = r^{-\alpha}\bx,
\qquad
\nu_{\textrm{new}} = \nu^{1-\alpha}
\end{equation}}
reduces system \eqref{eqn:epsReg} to the form 
\begin{align}\label{eqn:epsReg_nrm2}
 \dot{\bx}_{\textrm{new}} =  
 \begin{cases} 
	\bFF_{\textrm{new}}(\by) & r_{\textrm{new}} > \nu_{\textrm{new}};\\
	\bG_{\textrm{new}}(\bx_{\textrm{new}}/\nu_{\textrm{new}}) & r_{\textrm{new}} \leq \nu_{\textrm{new}},
 \end{cases}\qquad 
  \bx_{\textrm{new}}(t_0)=|\bx_0|^{1-\alpha}\by_0,
\end{align}
where $\bFF_{\textrm{new}}(\by)$ is given in \eqref{ODEex} and 
\begin{equation}\label{eqn:epsReg_nrm3}
\bG_{\textrm{new}}(\bx_{\textrm{new}}) = r_{\textrm{new}}^{-\alpha/(1-\alpha)}
\left[(1-\alpha) G_r\left(r_{\textrm{new}}^{1/(1-\alpha)}\by\right)\by
+\bG_s\left(r_{\textrm{new}}^{1/(1-\alpha)}\by\right)\right],
\end{equation}
with $G_r(\bx) = \by \cdot \bG(\bx)$ and $\bG_s(\bx) = \bG(\bx)-G_r(\bx)\by$ being the radial and spherical parts of $\bG(\bx)$. The function $\bG_{\textrm{new}}$ in \eqref{eqn:epsReg_nrm3} is continuous in the case of $\alpha < 0$. It remains to notice that the system (\ref{eqn:epsReg_nrm2}) corresponds to the case $\alpha = 0$, for which the statement of the theorem was already proved; {note that this proof required only the continuity of $\bG$}.
\end{proof}

\subsection{Selection by renormalization}
\label{sec4.2}

Although convergent subsequences are guaranteed by Theorem~\ref{convSub}, the limits are generally not unique after the blowup time $t_b$. Different sequences $\nu_n \to 0$ may lead to different {limits, as we show with explicit examples below.} Despite this non-uniqueness, we will see in this section that the regularization procedure drastically decreases the number of ``choices'' that the system can make after the blowup. Most importantly, we will show a counterintuitive fact that this set of possible choices is controlled primarily by the properties of the original singular system, rather than by a specific form of the regularization.

In this paper we only address the post-blowup dynamics in the simplest case, when the blowup is self-similar, i.e., it corresponds to the fixed-point attractor $\by_*$ of the renormalized system (\ref{yEqn}) with $F_r(\by_*) < 0$; see Theorem~\ref{th:EnterAttractor} {and Example~\ref{Ex0}.} Thus, the angular part of our initial condition is assumed to be in the basin of attraction $\mathcal{B}(\{\by_*\})$.

We start by focusing on a specific initial condition $\bx_0 = r_0\by_*$ for some $r_0 > 0$. As we showed in the previous section, this initial condition leads to solution (\ref{eqFPgen}), which blows up at finite time $t_b$ given in (\ref{eqFPgen2}). The same solution is valid for the regularized system (\ref{eqn:epsReg}) in the interval $t_0 \le t \le t_{ent}^\nu$, where
\begin{align}\label{eqRN2}
t_{ent}^\nu := t_b-\frac{\nu^{1-\alpha}}{F_r(\by_*)(\alpha-1)}
\end{align}
is the time when solution (\ref{eqFPgen}) hits the sphere $\{\bx :r = \nu\}$ and starts to be affected by the regularization. 
First,  we show that the limit $\nu \to 0$ reduces to the scaling limit for a single function $\bX(\tau)$ defined by
\begin{align}\label{eqRN1}
\bx^\nu(t)=\nu\bX(\tau^\nu(t)), \quad \tau^\nu(t) := \nu^{-(1-\alpha)} (t-t_b).
\end{align}
By matching the representation (\ref{eqRN1}) with (\ref{eqFPgen}), we obtain
\begin{align}\label{ODErescaleIC}
\bX(\tau) = \left[F_r(\by_*)(1-\alpha)\tau\right]^{\frac{1}{1-\alpha}} \by_*,
\quad 
\tau_0 \le \tau \le \tau_{ent},
\end{align}
where $\tau_0$ and $\tau_{ent}$ correspond to $t_0$ and $t_{ent}^\nu$ via
\begin{align}\label{eqRN3}
\tau_0 := -\nu^{-(1-\alpha)} (t_b-t_0),\quad
\tau_{ent} :=-\nu^{-(1-\alpha)} (t_b-t_{ent}^\nu)=  -\frac{1}{F_r(\by_*)(\alpha-1)}.
\end{align}
Past the time $\tau> \tau_{ent}$, we find the behavior of  $\bX(\tau)$ by substituting (\ref{eqRN1}) into (\ref{eqn:epsReg}) to obtain:
\begin{align}\label{ODErescale}
\frac{d\bX}{d\tau}&= 
 \begin{cases}
R^\alpha \bFF(\bY),& R>1\\
\phantom{R^\alpha} \bG( \bX), & R\leq 1
 \end{cases}, \qquad \bX(\tau_{ent}) = \by_*,
\end{align}
where $R = |\bX|$ and $\bY = \bX/R$.  Thus both the equations \eqref{ODErescale} and initial conditions determining $\bX(\tau)$ are completely independent of $\nu$, and $\bx^\nu$ may be genuinely obtained by the scaling \eqref{eqRN1}. Equation (\ref{ODErescale}) has a unique global solution $\bX(\tau)$. Due to the scaling property (\ref{eqRN1}), the inviscid limit $\nu \to 0$ depends on the behavior of the solution $\bX(\tau)$ at large $\tau$, see Fig.~\ref{fig2}. We will consider the following two generic scenarios: 

\begin{defin}
\label{def2}
Given the solution $\bX(\tau)$ of (\ref{ODErescale}), we say that the regularization is \textit{expelling}, if there exists a finite time $\tau_{esc}>\tau_{ent}$ such that $R(\tau_{esc}) = 1$ and the solution is outside the regularization region, $R(\tau) > 1$, for all $\tau > \tau_{esc}$. We say that the regularization is \textit{trapping}, if the solution stays (or returns to) the regularization region, $R(\tau) \le 1$, for arbitrarily large $\tau$ and, additionally, remains bounded, $R(\tau) \le R_b < \infty$ for all $\tau > \tau_{ent}$. 
\end{defin}

When the regularization is trapping, one can typically expect that system (\ref{ODErescale}) has an attractor, which lies in the bounded region $R(\tau) \le R_b$ and at least partially belongs to the regularization region $R \le 1$. In this case, it is sufficient that the initial point $\by_*$ belongs to the basin of attraction.

In the case of expelling regularization, equation (\ref{ODErescale}) reduces to $\dot{\bX} = R^\alpha \bFF(\bY)$ for $\tau \ge \tau_{esc}$. Thus, we can follow the same steps as {in (\ref{xsol}), (\ref{eq_s_of_t})} and define 
\begin{align}\label{renormCoord2}
Z(\tau) = \ln R(\tau),\quad  
S(\tau) = \int_{\tau_{esc}}^{\tau} e^{-(1-\alpha)Z(\tau')} d\tau'.
\end{align}
We denote by $\tau = \ttau(S)$ the inverse of the monotonously increasing function $S = S(\tau)$ and define $\tbY(S):=  \bY(\ttau(S))$ and  $\tZ(S):=Z(\ttau(S))$. Similarly to (\ref{eq_t_of_s}), we have
\begin{align}\label{eq_T_of_S}
\ttau(S) = \tau_{esc}+\int_{0}^S e^{(1-\alpha)\tZ(S')} dS'.
\end{align}
In analogy to (\ref{yEqn})--(\ref{zEqn}), the renormalized equations for $\tbY(S)$ and  $\tZ(S)$ are written as
\begin{align} \label{yEqn2}
d\tbY/dS&= \bFF_s( {\tbY}), \quad \quad \tbY(0) = \by_*^{esc}, \\  \label{zEqn2}
d\tZ/dS&= F_r(\tbY),  \quad \quad \ \tZ(0) = 0,
\end{align}
where the initial conditions follow from the assumption $\bX(\tau_{esc}) = \by_*^{esc}$ with $Z(\tau_{esc}) = \ln |\bX(\tau_{esc})| = 0$ and $S(\tau_{esc}) = 0$.
The solution of (\ref{yEqn2})--(\ref{zEqn2}) exists globally for $S \ge 0$. The solution $\bX(\tau)$ is recovered from (\ref{renormCoord2}) and \eqref{eq_T_of_S} via $\bX(\tau) = \tilde{\bX}({S}(\tau))$ with $\tilde{\bX}(S) = e^{\tilde{Z}(S)}\tilde{\bY}(S)$.

Since system (\ref{yEqn2}) has the same form (apart from initial conditions) as (\ref{yEqn}) in the previous section, we can use the same terminology of focusing and defocusing attractors given by Definition~\ref{def1}. Recall that the attractors describe the dynamics of spherical components $\tbY(S)$, while the property of being focusing or defocusing characterizes the radial variable $\tilde{R}(S) = e^{\tZ(S)}$. In the focusing case, orbits of the ideal system starting in the basin of attraction converge exponentially to the origin $\tilde{R}(S) \to 0$ as $S \to \infty$, and in the defocusing case they diverge exponentially to infinity.

Clearly, for times $t < t_b$ before the singularity, we can use continuous dependence of the solution $\bx^\nu(t)$ on the parameter $\nu$. Hence, the limit $\nu \to 0$ exists, it is unique and equal to the solution $\bx(t)$ of the original system (\ref{ODE}). By continuity, we extend this statement to the singular point, i.e., for the full time interval $t \in [t_0,t_b]$. The renormalized system (\ref{yEqn2})--(\ref{zEqn2}) facilitates the study of the inviscid limit for times after the blowup, $t >t_b$. Though the role of renormalization is similar to the blowup analysis in Section~\ref{sec3}, the implications are different.

\begin{thm}\label{thm:pastBlowup}
Let $\bx^\nu(t)$ be a solution to the regularized problem \eqref{eqn:epsReg} for the initial condition $\bx_0 = r_0\by_*$ satisfying $\bFF_s(\by_*)=0$ and $F_r(\by_*) < 0$. 
\begin{itemize}
  \item[(a)] For the trapping regularization, the inviscid limit is trivial: $\bx(t) = \lim_{\nu \to 0}\bx^\nu(t) = 0$ for all $t > t_b$. \vspace{1mm}
  \item[(b)] For the expelling regularization, let $\nu_n\to 0$ be a subsequence providing (by Theorem~\ref{convSub}) the uniformly convergent solutions $\bx^{\nu_n}(t) \to \bx(t)$ in a given time interval $[t_b,t_1]$. Let $\by_*^{esc}\in \mathcal{B}(\cA')$ for a defocusing attractor $\cA'$ of system (\ref{yEqn2}) and assume that the solution $\tbY(S)$ satisfies the condition
\begin{align}\label{eq_cond_Y}
\limsup_{S\to \infty} \int_0^S \exp\left[-(1-\alpha)\int_{S'}^SF_r (\tbY(S''))dS''\right]dS'< \infty.
\end{align}
Then $r(t)>0$ and $\by(t) \in \cA'$ for all $t > t_b$. 
\end{itemize}
\end{thm}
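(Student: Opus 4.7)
The plan is to leverage the scaling identity \eqref{eqRN1} and reduce the whole statement to a question about the long-time behavior of the single $\nu$-independent function $\bX(\tau)$. First I would observe that, because $\bx_0 = r_0\by_*$ lies on the ray through the fixed point, the exact pre-regularization solution \eqref{eqFPgen} matches \eqref{eqRN1} and determines $\bX$ on $[\tau_0,\tau_{ent}]$ via \eqref{ODErescaleIC}, while \eqref{ODErescale} extends $\bX$ uniquely to all $\tau\ge \tau_{ent}$. Consequently $\bx^\nu(t) = \nu\bX(\tau^\nu(t))$ holds for every $t\ge t_0$ and every $\nu>0$; and for each fixed $t > t_b$ one has $\tau^\nu(t) = \nu^{-(1-\alpha)}(t-t_b) \to +\infty$ as $\nu\to 0$. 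The inviscid limit therefore amounts entirely to understanding $\bX(\tau)$ at $\tau\to\infty$.

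For part (a) this is immediate: trapping gives $|\bX(\tau)|\le R_b$ for all $\tau\ge \tau_{ent}$, so $|\bx^\nu(t)|\le \nu R_b\to 0$, and hence $\bx(t) = \bzero$.

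For part (b) I would proceed as follows. Once $\tau\ge \tau_{esc}$, the solution lies outside the regularization region, so $\bX$ obeys the unregularized equation and one may pass to the renormalized variables \eqref{renormCoord2}--\eqref{zEqn2} with $\tbY(0) = \by_*^{esc}$ and $\tZ(0)=0$. Since $\by_*^{esc}\in\mathcal{B}(\cA')$ and $\cA'$ is defocusing, Theorem~\ref{th:EnterAttractor} applied to the system \eqref{yEqn2}--\eqref{zEqn2} (which is structurally identical to \eqref{yEqn}--\eqref{zEqn}) yields $\mathrm{dist}(\tbY(S),\cA') \to 0$ and $\tZ(S) \to +\infty$ as $S\to\infty$. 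For each fixed $t > t_b$, I would then define $S^\nu(t)$ by $\ttau(S^\nu(t)) = \tau^\nu(t)$ so that $S^\nu(t)\to\infty$ as $\nu\to 0$, and divide $t-t_b = \nu^{1-\alpha}\tau^\nu(t)$ by $|\bx^\nu(t)|^{1-\alpha} = \nu^{1-\alpha} e^{(1-\alpha)\tZ(S^\nu(t))}$. Using \eqref{eq_T_of_S} this produces
\begin{equation*}
\frac{t-t_b}{|\bx^\nu(t)|^{1-\alpha}} = \tau_{esc}\,e^{-(1-\alpha)\tZ(S^\nu(t))} + \int_0^{S^\nu(t)} \exp\!\left[-(1-\alpha)\int_{S'}^{S^\nu(t)} F_r(\tbY(S''))\,dS''\right] dS'.
\end{equation*}
The first term vanishes because $\tZ\to+\infty$, and by hypothesis \eqref{eq_cond_Y} the $\limsup$ of the second is finite. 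This forces $|\bx^\nu(t)|^{1-\alpha}$ to remain bounded below by a positive constant uniformly in small $\nu$, so along any subsequence $\nu_n\to 0$ for which $\bx^{\nu_n}(t)\to\bx(t)$ one concludes $r(t)>0$. The angular assertion follows directly from $\bx^\nu(t)/|\bx^\nu(t)| = \tbY(S^\nu(t))$: the right-hand side approaches the closed set $\cA'$, so the limit $\by(t) = \bx(t)/r(t)$ lies in $\cA'$.

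The main obstacle will be the algebraic identification of the ratio $(t-t_b)/|\bx^\nu(t)|^{1-\alpha}$ with precisely the integral appearing in \eqref{eq_cond_Y}; this identification is the reason for the form of the hypothesis, and once it is in hand the remaining steps follow more or less directly from Theorem~\ref{th:EnterAttractor} together with the scaling \eqref{eqRN1}. A subsidiary subtlety worth flagging is that when $\cA'$ is richer than a single point (for example a periodic cycle), the particular angular location $\by(t)$ may depend on the subsequence $\nu_n$ extracted in Theorem~\ref{convSub}, although its membership in $\cA'$ does not.
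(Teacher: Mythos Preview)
Your proposal is correct and follows essentially the same route as the paper. Both arguments hinge on the scaling identity \eqref{eqRN1} to reduce the question to the large-$\tau$ behavior of the single function $\bX(\tau)$; part (a) is handled identically, and for part (b) your displayed identity for $(t-t_b)/|\bx^\nu(t)|^{1-\alpha}$ is just an algebraic rearrangement of the paper's expression \eqref{eq_Prv2} for $r(t)$, after which the hypothesis \eqref{eq_cond_Y} and the convergence $\tbY(S)\to\cA'$ finish the job in the same way. The only place where the paper is slightly more explicit is in justifying $S^\nu(t)\to\infty$: it bounds $\tZ(S)$ above by $SM$ and derives the quantitative lower bound \eqref{eq_lim1fst} on $S(\tau)$, whereas you assert this step; since $\tZ(S)\ge 0$ gives $\ttau(S)\ge \tau_{esc}+S$, the inverse map is well-defined and your assertion is easily justified, so this is a presentational rather than a logical gap.
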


 
The conclusions of part (b) of Theorem~\ref{thm:pastBlowup} constitute a severe restriction on the limiting solutions: the spherical component of the solution must belong to the attractor $\mathcal{A}'$ given by the ideal system, independently on a particular form of regularization function $\mathbf{G}$.
 Before proving the above theorem, we provide a Proposition below which gives {an example of concrete condition} on the function $F_r$ that guarantees that property \eqref{eq_cond_Y} holds.

{\begin{prop}\label{prop_cond}
Inequality (\ref{eq_cond_Y}) in Theorem~\ref{thm:pastBlowup} holds if there are constants $c_1 > 0$ and $c_0 \in \mathbb{R}$ such that the inequality
\begin{align}\label{eq_cond_Y3}
\int_{S'}^SF_r (\tbY(S''))dS'' > c_1(S-S')+c_0
\end{align}
is satisfied for any $S > S' \ge 0$.
In particular, this is the case when
$F_r(\bY) > 0$ for all $\bY \in \mathcal{A}'$.
\end{prop}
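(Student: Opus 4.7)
My plan is to handle the two claims in turn. For the first, the hypothesis supplies a lower bound on the inner integral $\int_{S'}^S F_r(\tbY(S''))dS''$, and since $1-\alpha>0$ this translates directly into an upper bound on the exponential in \eqref{eq_cond_Y}. Concretely, I would substitute
\[
\exp\Bigl[-(1-\alpha)\int_{S'}^S F_r(\tbY(S''))dS''\Bigr]
< e^{-(1-\alpha)c_0}\,e^{-(1-\alpha)c_1(S-S')},
\]
and integrate in $S'$ from $0$ to $S$. The substitution $u=S-S'$ turns this into $\int_0^S e^{-(1-\alpha)c_1 u}\,du$, which is bounded above by $1/[(1-\alpha)c_1]$ uniformly in $S$. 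Multiplying by the prefactor $e^{-(1-\alpha)c_0}$ gives a finite bound, so the $\limsup$ in \eqref{eq_cond_Y} is finite. This step is essentially a one-line computation and is not the main obstacle.

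For the second claim, suppose $F_r(\bY)>0$ for every $\bY\in\mathcal{A}'$. Because $\mathcal{A}'$ is compact and $F_r$ is continuous, there exists $c>0$ with $F_r(\bY)\ge 2c$ on $\mathcal{A}'$, and by uniform continuity there is an open neighborhood $U\supset \mathcal{A}'$ such that $F_r(\bY)\ge c$ on $U$. The hypothesis $\by_*^{esc}\in\mathcal{B}(\mathcal{A}')$ yields $\operatorname{dist}(\tbY(S),\mathcal{A}')\to 0$ as $S\to\infty$, so there is $S_0\ge 0$ with $\tbY(S'')\in U$ for all $S''\ge S_0$. Set $M:=\sup_{\by\in S^{d-1}}|F_r(\by)|<\infty$.

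The key step is then a case split for $S>S'\ge 0$. If $S'\ge S_0$, then $\int_{S'}^S F_r(\tbY(S''))dS''\ge c(S-S')$ immediately. If $S'<S_0$, split the integral at $S_0$ (when $S\ge S_0$) or bound it trivially (when $S<S_0$): in the first case we get at least $-M(S_0-S')+c(S-S_0)\ge c(S-S')-(c+M)S_0$, and in the second case we have $-M(S-S')>c(S-S')-(c+M)S_0$ since $S-S'<S_0$. In every case the inequality \eqref{eq_cond_Y3} holds with $c_1=c$ and $c_0=-(c+M)S_0$ (replacing strict inequality by a constant shift if needed, e.g.\ $c_0-1$). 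Applying the first part of the proposition then completes the argument.

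The main obstacle is the case analysis in the second claim: we must argue that the integrand's sign is only controlled asymptotically, so the finite interval $[0,S_0]$ on which $\tbY$ may lie outside $U$ has to be absorbed into the constant $c_0$. This is handled by the simple observation that $F_r$ is globally bounded by $M$, which costs at most a constant additive term proportional to $S_0$.
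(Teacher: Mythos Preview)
Your proposal is correct and follows essentially the same approach as the paper. For the first claim both you and the paper substitute the bound \eqref{eq_cond_Y3} into the exponential and integrate to obtain the uniform upper bound $e^{-(1-\alpha)c_0}/[(1-\alpha)c_1]$; for the second claim both arguments choose $c_1$ as (half) the minimum of $F_r$ on $\mathcal{A}'$, use convergence to the attractor to find a time $S_0$ (the paper's $S_1$) after which $F_r(\tbY(S))>c_1$, and absorb the transient on $[0,S_0]$ into $c_0$ using the global bound on $|F_r|$---the paper writes $c_0=-c_1S_1-\int_0^{S_1}|F_r(\tbY(S''))|\,dS''$ without the explicit case split, but this is just a compressed version of your argument.
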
} 

{\begin{proof}
Since $\alpha < 1$, we can use (\ref{eq_cond_Y3}) in the estimate
\begin{align}\label{eq_cond_Y2}
\int_0^S \exp\left[-(1-\alpha)\int_{S'}^SF_r (\tbY(S''))dS''\right]dS' <  
e^{-(1-\alpha)c_0}\int_0^S e^{-c_1(1-\alpha)(S-S')}dS' 
< \frac{e^{-(1-\alpha)c_0}}{(1-\alpha)c_1}.
\end{align}
This implies  (\ref{eq_cond_Y}). In the case, when $F_r(\bY) > 0$ for all $\bY \in \mathcal{A}'$ on the attractor, we can choose $c_1 = \frac{1}{2}\min_{\bY\in\mathcal{A}'}F_r(\bY) > 0$.
Then, for any solution $\tbY(S)$ attracted to $\mathcal{A}'$, there exists an $S_1 < \infty$ such that $F_r(\tbY(S)) > c_1$ for all $S\geq S_1$. Thus, inequality (\ref{eq_cond_Y3}) is satisfied for $S > S' \ge S_1$ with $c_0 = 0$. One can verify that this inequality can be extended to the intervals with $S > S' \ge 0$ and 
\begin{align}\label{eq_cond_Y3pr}
c_0=-c_1S_1-\int_{0}^{S_1} |F_r (\tbY(S''))|dS''.
\end{align} 
\end{proof}}

We now proceed with the proof of Theorem \ref{thm:pastBlowup}.

\begin{proof}[Theorem \ref{thm:pastBlowup}]
(a) From Eq. (\ref{eqRN1}), we express $r^\nu = \nu R$ for $t = t_b+\nu^{1-\alpha}\tau^\nu > t_b+\nu^{1-\alpha}\tau_{ent}$.
In case of trapping regularization, the renormalized solution is bounded, $R(\tau) \le R_b$. With these properties, in the limit $\nu \to 0$ we obtain $r = 0$ for $t > t_b$. 

(b) Consider a subsequence $\lim_{n\to \infty}\nu_n\to 0$ given by Theorem~\ref{convSub}. It provides the uniformly convergent limit $\bx(t) = \lim_{n\to\infty}\bx^{\nu_n}(t)$, which solves equations \eqref{ODE} and \eqref{ODEb}. Let us fix some time $t > t_b$. Since $\alpha < 1$, the sequence of corresponding values of $\tau_n = \tau^{\nu_n}(t)$ given by (\ref{eqRN1}) diverges: $\lim_{n\to\infty} \tau_n = \infty$. The corresponding values of the renormalized time ${S}_n = {S}(\tau_n)$ can be obtained from \eqref{renormCoord2}, or implicitly by inverting (\ref{eq_T_of_S}). Here the value of $\tilde{Z}(S) = \ln \tilde{R}(S) \ge 0$ is bounded from below because $\tilde{R}(S) \ge 1$ for all $S>0$ by the assumptions of the expelling regularization. The upper bound is obtained from Eq.~(\ref{zEqn2}) as $\tilde{Z}(S) \le S M$, where $M:=\max_{\bY\in S^{d-1}}F_r(\bY)$. We must have $M> 0$ for the expelling regularization. These estimates applied to the relation (\ref{eq_T_of_S}) yield 
\be
\label{eq_lim0}
\tau_{esc}+S \le \ttau(S) \le \tau_{esc}+\frac{\exp\left[(1-\alpha)S M\right]-1}{(1-\alpha) M} \quad
\textrm{for} \quad S\geq 0.
\ee
Substituting $S= {S}(\tau)$ into the second inequality of \eqref{eq_lim0}, which yields $\ttau({S}(\tau)) = \tau$, after simple manipulation we have
\be
\label{eq_lim1fst}
{S}(\tau) \geq \frac{\ln\left[ (1-\alpha) M(\tau - \tau_{esc}) +1 \right]}{(1-\alpha)M}\quad 
\textrm{for} \quad\tau\geq \tau_{esc}.
\ee
As we already mentioned, $\lim_{n\to\infty} \tau_n = \infty$. The inequality (\ref{eq_lim1fst}) proves that $\lim_{n\to\infty}{S}_n = \infty$.

From (\ref{eqRN1}) and (\ref{eq_T_of_S}), using the fact that $\tilde{\tau}(S)= \tau^\nu(t)$ when $S= {S}(\tau^\nu(t))$, we express
\be
\label{eq_Prv1}
\nu^{1-\alpha} = 
\left(t-t_b-\nu^{1-\alpha}\tau_{esc}\right)\left(\int_{0}^{{S}(\tau^\nu(t))} e^{(1-\alpha)\tZ(S')} dS'\right)^{-1}.
\ee
Using the relations $r = \nu R = \nu e^Z$ with $Z = \tZ({S}(\tau^{\nu}(t)))$ and substituting $\nu$ from (\ref{eq_Prv1}), we obtain 
\be
\label{eq_Prv2}
r(t) = \left(t-t_b-\nu^{1-\alpha}\tau_{esc}\right)^{\frac{1}{1-\alpha}}\left(\int_{0}^{{S}(\tau^\nu(t))} e^{-(1-\alpha)(\tZ({{S}(\tau^\nu(t))})-\tZ(S'))} dS'\right)^{-\frac{1}{1-\alpha}}.
\ee
In the inviscid limit $\nu_n \to 0$, the first factor in (\ref{eq_Prv2}) tends to $(t-t_b)^{\frac{1}{1-\alpha}} > 0$. Integrating Eq.~(\ref{zEqn2}) as 
\be
\label{eq_Prv3}
\tZ({{S}(\tau^\nu(t))})-\tZ(S') = \int_{S'}^{{{S}(\tau^\nu(t))}}F_r(\tbY(S'')) dS''
\ee
and using (\ref{eq_cond_Y}), we conclude that expression (\ref{eq_Prv2}) provides $r(t) > 0$ for $t > t_b$ in the limit $\nu_n \to 0$.

For the angular variables, we write
\be
\label{eq_lim1}
\by(t)=\lim_{n\to \infty} \by^{\nu_n}(t) = \lim_{n\to \infty} \bY(\tau_n) = \lim_{n\to \infty} \tbY({S}_n).
\ee
Since  $\lim_{n\to\infty}{S}_n = \infty$ and the initial condition $\tbY(0) = \by_*^{esc} \in \mathcal{B}(\cA')$ is assumed to be in the basin of attraction, then $ \lim_{n\to \infty} \tbY({S}_n)\in \cA'$ and we conclude that $\by(t)$ belongs to the attractor $\cA'$.  
\end{proof}

\subsection{Extension to generic initial conditions}\label{sec:generic_IC}

Now let us return to the generic case, when the initial condition in (\ref{ODE}) has the angular part $\by_0$ belonging to the basin of attraction of the fixed point, $\mathcal{B}(\{\by_*\})$, instead of being exactly $\by_*$. By Theorem~\ref{th:EnterAttractor}, the corresponding solution blows up in finite time $t_b$ with the angular part tending to the fixed-point attractor, $\by(t) \to \by_*$, as the time increases to the moment of blowup, $t \to t_b$. For the $\nu$-regularized problem this implies that, for a sufficiently small $\nu$, there is time $t_{ent}^\nu \in [t_0,t_b)$, when the solution enters the regularization region. Furthermore, the value $\by_{ent}^{\nu} = \by(t_{ent}^\nu)$ can be made arbitrarily close to $\by_*$. Therefore, the inviscid limit $\nu \to 0$ corresponds to the problem (\ref{ODErescale}), where $\by_*$ in the initial condition is substituted by an arbitrarily close state $\by_{ent}^{\nu}$ depending on $\nu$.

We already mentioned that the assumptions of Theorem~\ref{thm:pastBlowup} can be expected to be generic, and in the following sections we will prove that this is true in some cases. Hence, under certain non-degeneracy assumptions, it should be possible to extend both statements of Theorem~\ref{thm:pastBlowup} to arbitrary initial conditions with the same asymptotic blowup behavior, i.e., $\by_0 \in \mathbf{B}(\{\by_*\})$. For this purpose, let us enhance the notions of expelling and trapping regularizations. We call the regularization \textit{locally expelling} if all solutions $\bX(t)$ of (\ref{ODErescale}) with the initial conditions $\bX(\tau_{ent}) = \by_{ent}$ in some neighborhood of $\by_*$ are expelled in the sense of Definition~\ref{def2}; additionally, we require that the point $\by_{esc} = \bX(\tau_{esc})$ depends smoothly on $\by_{ent}$. The latter condition is provided naturally by the smooth dependence on initial conditions, if the vector $\bFF(\by_*^{esc})$ is transversal to the unit sphere, where $\by_*^{esc}$ corresponds to the solution starting exactly at $\by_*$.
Similarly, we call the regularization \textit{locally trapping}, if all solutions $\bX(t)$ of (\ref{ODErescale}) with the initial conditions in some neighborhood of $\by_*$ are trapped in the sense of Definition~\ref{def2} with the same bound $R(\tau) \le R_b < \infty$ for $\tau > \tau_{ent}$. Now we can formulate the straightforward extension of Theorem~\ref{thm:pastBlowup} as

\begin{corollary}\label{corollary1}
Let $\bx^\nu(t)$ be a solution to the regularized problem \eqref{eqn:epsReg} for the initial condition $\bx_0 = r_0\by_0$ with $\by_0 \in \mathcal{B}(\{\by_*\})$, where  $\bFF_s(\by_*)=0$ and $F_r(\by_*) < 0$. 
\begin{itemize}
  \item[(a)] For the locally trapping regularization, the inviscid limit is trivial: $\bx(t) = \lim_{\nu \to 0}\bx^\nu(t) = 0$ for all $t > t_b$. \vspace{1mm}
  \item[(b)] For the locally expelling regularization, let $\nu_n\to 0$ be a subsequence providing (by Theorem~\ref{convSub}) the uniformly convergent solutions $\bx^{\nu_n}(t) \to \bx(t)$ in a given time interval $[t_b,t_1]$. Let $\by_*^{esc}$ belong to the basin of attraction $\mathcal{B}(\cA')$ for a defocusing attractor $\cA'$ of system (\ref{yEqn2}). Additionally, we assume that any {solution $\tbY(S)$ starting in some neighborhood of $\by_*^{esc}$ satisfies condition (\ref{eq_cond_Y}).}
Then $r(t)>0$ and $\by(t) \in \cA'$ for all $t > t_b$. 
\end{itemize}
\end{corollary}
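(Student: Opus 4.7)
The plan is to reduce the generic initial-condition case to the fixed-point case of Theorem~\ref{thm:pastBlowup}, by combining the pre-blowup asymptotics with continuous dependence on initial conditions for the rescaled equation (\ref{ODErescale}). I would first use Theorem~\ref{th:EnterAttractor} and the power-law asymptotic (\ref{zEqnIntD}) to deduce that $\by(t) \to \by_*$ and $r(t) \sim [(\alpha-1)F_r(\by_*)(t_b-t)]^{1/(1-\alpha)}$ as $t \to t_b$. Defining the entry time $t_{ent}^\nu < t_b$ as the first moment when $r(t) = \nu$, and setting $\tau_{ent}^\nu := \nu^{-(1-\alpha)}(t_{ent}^\nu - t_b)$ and $\by_{ent}^\nu := \by(t_{ent}^\nu)$, the asymptotics yield $\tau_{ent}^\nu \to \tau_{ent}$ and $\by_{ent}^\nu \to \by_*$ as $\nu \to 0$. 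Below the entry time the regularization is inactive, so the inviscid limit on $[t_0,t_b]$ just reproduces the original solution.

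For $t \geq t_{ent}^\nu$, the scaling $\bx^\nu(t) = \nu \bX^\nu(\tau^\nu(t))$ from (\ref{eqRN1}) still reduces the regularized ODE to the $\nu$-independent equation (\ref{ODErescale}), but with $\nu$-dependent initial data $\bX^\nu(\tau_{ent}^\nu) = \by_{ent}^\nu$. Since the right-hand side of (\ref{ODErescale}) is continuous and smooth off the unit sphere, standard continuous dependence on initial conditions guarantees that on any finite interval of $\tau$, the solutions $\bX^\nu(\tau)$ converge uniformly to the solution $\bX(\tau)$ of Theorem~\ref{thm:pastBlowup}. For part (a), the locally trapping assumption then provides a uniform bound $R^\nu(\tau) = |\bX^\nu(\tau)| \leq R_b$ for all $\tau > \tau_{ent}^\nu$ and all sufficiently small $\nu$, so $r^\nu(t) = \nu R^\nu(\tau^\nu(t)) \leq \nu R_b \to 0$ for every fixed $t > t_b$, proving the trivial limit.

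For part (b), the locally expelling hypothesis supplies an escape time $\tau_{esc}^\nu$ depending smoothly on $\by_{ent}^\nu$, hence $\tau_{esc}^\nu \to \tau_{esc}$ and $\by_{esc}^\nu := \bX^\nu(\tau_{esc}^\nu) \to \by_*^{esc}$ as $\nu \to 0$. Openness of the basin $\mathcal{B}(\cA')$ forces $\by_{esc}^\nu \in \mathcal{B}(\cA')$ for small enough $\nu$. Reconstructing the renormalized system (\ref{yEqn2})--(\ref{zEqn2}) with $\by_{esc}^\nu$ in place of $\by_*^{esc}$, the assumption that (\ref{eq_cond_Y}) holds for \emph{every} solution $\tbY(S)$ starting in a neighborhood of $\by_*^{esc}$ allows me to replay the argument of Theorem~\ref{thm:pastBlowup}(b) essentially verbatim: the identity (\ref{eq_Prv2}) produces a strictly positive limit $r(t)$, and the convergence (\ref{eq_lim1}) together with $\lim_{n\to\infty} S_n = \infty$ places $\by(t)$ on $\cA'$.

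The main obstacle is tracking the uniformity of the estimates once the initial condition drifts with $\nu$. In particular, passing to the limit in (\ref{eq_Prv2}) requires the integral in its denominator to stay bounded uniformly along the subsequence, which is why the strengthened hypothesis — that (\ref{eq_cond_Y}) holds throughout a neighborhood of $\by_*^{esc}$ rather than only at $\by_*^{esc}$ — is essential, rather than a merely pointwise condition on the single trajectory of Theorem~\ref{thm:pastBlowup}. Continuous dependence alone controls the convergence of the integrals over each fixed compact interval in $S$, and the uniform version of (\ref{eq_cond_Y}) supplies the tail control needed to conclude that $r(t)$ converges to a strictly positive value, so that the angular convergence $\by(t) \in \cA'$ can be extracted without degeneracy.
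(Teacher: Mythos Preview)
Your approach matches the paper's own justification, which is not a formal proof but the discussion in \S\ref{sec:generic_IC} preceding the statement: the paper simply observes that $\by_{ent}^\nu \to \by_*$ as $\nu \to 0$, so the inviscid limit reduces to the rescaled problem (\ref{ODErescale}) with an initial condition arbitrarily close to $\by_*$, and then declares the result a ``straightforward extension'' of Theorem~\ref{thm:pastBlowup}. Your proposal fleshes out precisely this sketch---continuous dependence for (\ref{ODErescale}), the locally trapping/expelling definitions supplying uniformity in a neighborhood of $\by_*$, and the neighborhood version of (\ref{eq_cond_Y}) controlling the tail of the integral in (\ref{eq_Prv2})---and you correctly identify the uniformity-in-$\nu$ issue as the only nontrivial point; the paper leaves that implicit. (The same strategy reappears, with a few more details, in the paper's proof of Corollary~\ref{corP}.)
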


Corollary~\ref{corollary1} extends the conclusion of Theorem~\ref{thm:pastBlowup} to an open subset of initial conditions $\bx_0$: the spherical component of the limiting solutions after blowup must belong to the attractor $\mathcal{A}'$ given by the ideal singular system, independently on a particular form of the regularization function $\mathbf{G}$. {This restriction can be very strong, providing a constructive selection rule, as we describe in the next sections for the examples of fixed-point and periodic attractors $\mathcal{A}'$.}

\section{Fixed-point attractor and the unique inviscid limit}\label{sec5_fp}

Let us consider the simplest case when $\cA' = \{\by'_*\}$ is a fixed-point attractor, which is the case when $\bFF_s(\by'_*) = 0$. With the additional condition $F_r(\by'_*) > 0$, we guarantee that the attractor is defocusing and, by Proposition~\ref{prop_cond}, the inequality  (\ref{eq_cond_Y}) holds. The part (b) of Theorem~\ref{thm:pastBlowup} and Corollary~\ref{corollary1} state that the limiting solution for $t > t_b$ must satisfy the original equation (\ref{ODE}) with $\by(t) = \by'_*$ and $r(t) > 0$. Such a solution is unique and can be derived similarly to (\ref{eqFPgen}) in the form
\begin{align} \label{eqFPgenB}
\bx(t)= \left[(1-\alpha)F_r(\by'_*)(t-t_b)\right]^{\frac{1}{1-\alpha}}\by'_*,\quad
t > t_b.
\end{align}
Combining these arguments, we have

\begin{thm}\label{CorFixedPoint}
Let $\bx^\nu(t)$ be a solution to the regularized problem \eqref{eqn:epsReg} for the initial condition $\bx_0 = r_0\by_0$ with $\by_0 \in \mathcal{B}(\{\by_*\})$, where  $\bFF_s(\by_*)=0$ and $F_r(\by_*) < 0$. Assume that the regularization is locally expelling and $\by_*^{esc}$ is in the basin of attraction for $\cA' = \{\by'_*\}$ with $\bFF_s(\by'_*) = 0$ and $F_r(\by'_*) > 0$. 
Then the inviscid limit $\bx(t) = \lim_{\nu \to 0}\bx^\nu(t)$ exists and is given by (\ref{eqFPgenB}).
\end{thm}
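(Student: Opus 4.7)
The plan is to show that the hypotheses of Corollary~\ref{corollary1}(b) are in force, so that \emph{every} subsequential inviscid limit has the rigid form $\by(t)\equiv\by'_*$ and $r(t)>0$ for $t>t_b$, and then to observe that this rigidity reduces the limiting equation to a scalar non-Lipschitz Cauchy problem with a \emph{unique} positive solution, which is exactly \eqref{eqFPgenB}. Combined with the subsequential compactness from Theorem~\ref{convSub}, this will force the full family $\bx^\nu(t)$ to converge and to agree with \eqref{eqFPgenB}.

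First I would verify the assumptions of Corollary~\ref{corollary1}(b). The standing hypotheses already supply $\by_0\in\mathcal{B}(\{\by_*\})$ with $\bFF_s(\by_*)=0$ and $F_r(\by_*)<0$, a locally expelling regularization, and $\by_*^{esc}\in\mathcal{B}(\{\by'_*\})$. What remains is to check condition \eqref{eq_cond_Y} for every solution $\tbY(S)$ of \eqref{yEqn2} starting in some neighborhood of $\by_*^{esc}$. Since $F_r$ is continuous and $F_r(\by'_*)>0$, there exists an open neighborhood $U$ of $\by'_*$ and a constant $c_1>0$ with $F_r(\bY)\geq 2c_1$ for all $\bY\in U$. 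Any trajectory $\tbY(S)$ starting sufficiently near $\by_*^{esc}$ enters $U$ and remains there past some finite time $S_1$, so Proposition~\ref{prop_cond} applies (with $c_0$ as in \eqref{eq_cond_Y3pr}) and \eqref{eq_cond_Y} holds uniformly in a neighborhood. Thus Corollary~\ref{corollary1}(b) yields, for every subsequential limit $\bx(t)=\lim\bx^{\nu_n}(t)$, that $r(t)>0$ and $\by(t)=\by'_*$ for all $t\in(t_b,t_1]$.

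Next I would use this rigidity to identify the limit explicitly. Since $\bx(t)$ solves \eqref{ODE}--\eqref{ODEb} on $(t_b,t_1]$ with $\by\equiv\by'_*$, the identity $\bFF(\by'_*)=F_r(\by'_*)\by'_*$ (because $\bFF_s(\by'_*)=0$) reduces the vector equation to the scalar Cauchy problem
\begin{equation}
\dot r = F_r(\by'_*)\,r^\alpha,\qquad r(t_b)=0,\qquad r(t)>0\ \ \text{for}\ t>t_b.
\end{equation}
The boundary value $r(t_b)=0$ follows by continuity of the uniform limit through the blowup, using that on $[t_0,t_b)$ the pre-blowup limit is pinned down by Lipschitz uniqueness to coincide with \eqref{eqFPgen}. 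Separation of variables on $(t_b,t_1]$ then uniquely yields $r(t)=[(1-\alpha)F_r(\by'_*)(t-t_b)]^{1/(1-\alpha)}$, which together with $\by(t)=\by'_*$ is exactly \eqref{eqFPgenB}.

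Finally, because every subsequence of any null sequence $\nu_n\to 0$ admits, by Theorem~\ref{convSub}, a further subsequence converging uniformly on $[t_0,t_1]$ to a solution of \eqref{ODE}--\eqref{ODEb}, and because the argument above identifies every such sub-subsequential limit with the \emph{same} explicit function \eqref{eqFPgenB}, a standard compactness argument upgrades subsequential convergence to convergence of the full family $\bx^\nu(t)$. The main obstacle is the neighborhood-level verification of \eqref{eq_cond_Y} needed to invoke the corollary rather than the bare Theorem~\ref{thm:pastBlowup}: this is the step that genuinely uses continuity of $F_r$ together with the defocusing sign at $\by'_*$, and it is what makes Proposition~\ref{prop_cond} the essential technical input. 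Everything else reduces to the familiar solvability of a one-dimensional non-Lipschitz ODE under a strict positivity constraint, which selects the nontrivial branch over the trivial one.
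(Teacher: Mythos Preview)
Your argument is correct and follows essentially the same route as the paper: the paper also invokes Proposition~\ref{prop_cond} (using $F_r(\by'_*)>0$) to verify \eqref{eq_cond_Y}, applies Corollary~\ref{corollary1}(b) to force $\by(t)\equiv\by'_*$ and $r(t)>0$ for any subsequential limit, and then observes that this determines the solution uniquely as \eqref{eqFPgenB}. Your write-up is in fact more explicit than the paper's, which treats the theorem as an immediate consequence of the preceding discussion and leaves the sub-subsequence compactness upgrade and the scalar uniqueness under the positivity constraint implicit.
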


We see that the inviscid limit in the case of a fixed-point attractor is unique for all times and it is fully determined by the properties of the ideal system: a specific form of the regularization function $\bG$ has no effect on the limiting solution as far as the generic conditions of Theorem~\ref{CorFixedPoint} are satisfied. We will illustrate this rather counterintuitive property with the following two examples.

\begin{example}\label{ex:fixedpoint}\normalfont
Let us investigate continuation past blowup in the system of Example \ref{Ex1}, assuming {that $\alpha = 1/3$.}  As shown in Fig.~\ref{fig3}(a), there are an infinite number of solutions which start at the singularity. Also, for any time $t_2 > t_b$ there exist solutions which remain at the origin, $\bx(t) = \mathbf{0}$ for $t\in[t_b,t_2)$ and for $t>t_2$ escape the origin along any nontrivial path. We will now see how the regularization provides a specific choice of the solution for $t > t_b$.

\begin{figure}
\centering
\includegraphics[width=0.7\textwidth]{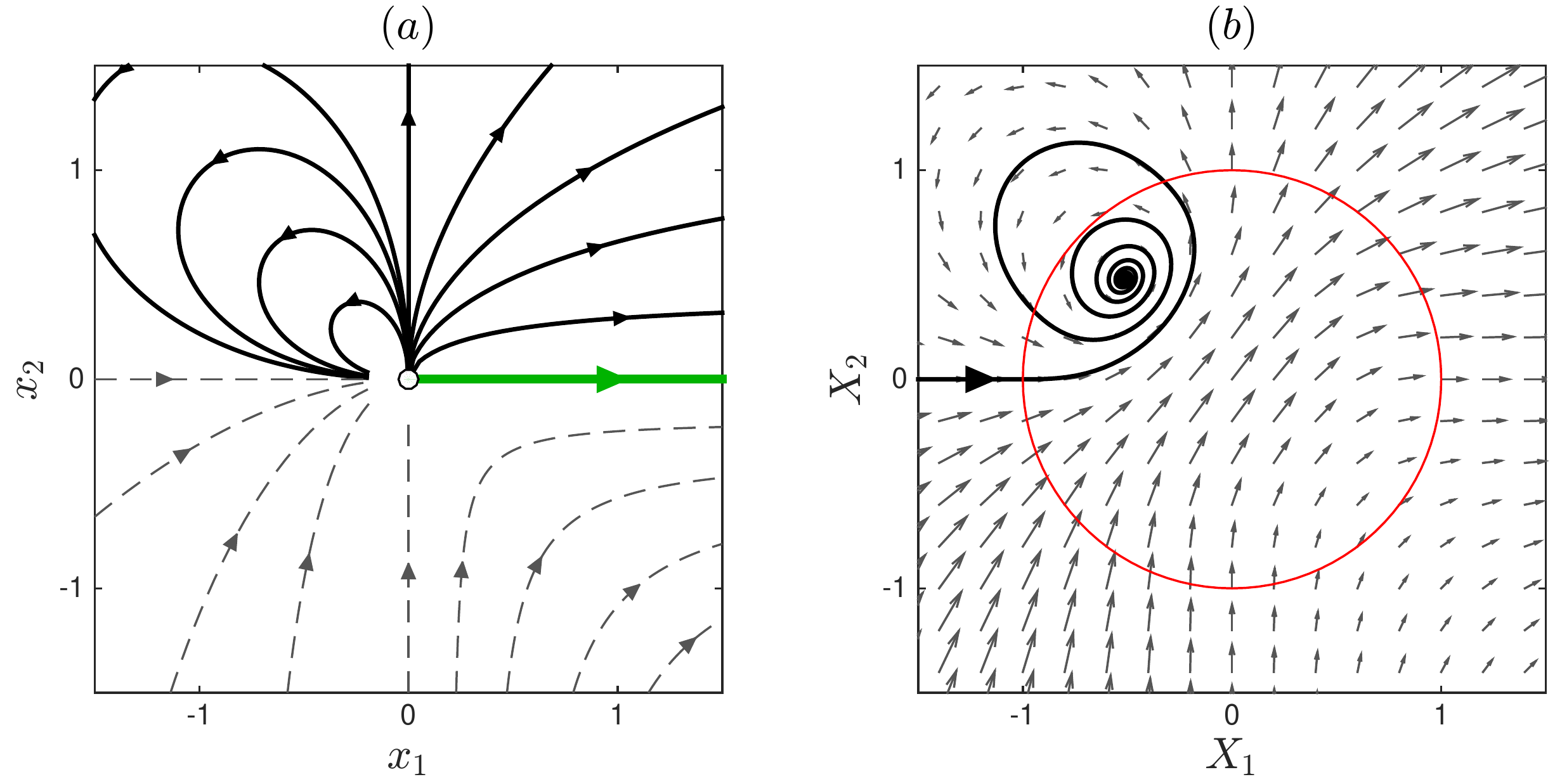}
\hspace{5mm}
\caption{(a) Solid \textbf{black} curves show an infinite number of trajectories starting at the origin for the singular system. The bold \textcolor{ForestGreen}{green} line indicates the unique solution chosen by a generic expelling regularization, see also Fig.~\ref{fig3b}(c). (b) Solution $\bX(\tau)$ of the system with trapping regularization confined to the \textcolor{red}{red} circle  $r \le 1$.}
\label{fig3}
\end{figure}

For the regularized system (\ref{eqn:epsReg}), we define
\begin{align} \label{eq_Gdef}
\bG(\bx) = \xi(r)\bff(\bx)+(1-\xi(r))\bG_0,\quad r = |\bx| \le 1,
\end{align}
where $\bG_0$ is a constant vector and $\xi(r) = 3r^2-2r^3$ smoothly interpolates between $\xi(0) = 0$ and $\xi(1) = 1$. Let us first choose $\bG = (1,\,1.3)$. The corresponding solution $\bX(\tau)$ of the regularized system (\ref{ODErescale}) with $\by_* = (-1,\,0)$ is obtained numerically and presented in Fig.~\ref{fig3}(b). Clearly, the proposed regularization is trapping. By Corollary~\ref{corollary1}, for any initial condition in the left half-plane $x_1 < 0$, the solution blows up in finite time $t_b$ and the inviscid limit provides the trivial solution, $\bx(t) = \mathbf{0}$, for $t \ge t_b$.

As the second choice, we take $\bG = (1,\,-2)$. The corresponding solution $\bX(\tau)$ of the regularized system (\ref{ODErescale}) is shown in Fig.~\ref{fig3b}(a). This regularization is expelling: the solution enters the regularization region $r \le 1$ through the point $\by_*$ and exits forever at $\by_*^{esc}$. The state $\by_*^{esc}$ belongs to the basin of attraction of the fixed point $\by'_* = (1,0)$ (i.e., $\varphi = 0$) of the renormalized system \eqref{yEqn2} with $F_r(\by'_*) = 1 > 0$, see Fig.~\ref{fig1}(b).
By Theorem \ref{CorFixedPoint}, the inviscid limit defines the unique solution 
\begin{align} \label{xsolleave}
\bx(t)=
\begin{pmatrix}\left[(1-\alpha)(t-t_b)\right]^{\frac{1}{1-\alpha} }
\\
0
\end{pmatrix},\quad   t \ge t_b.
\end{align}
Exactly the same solution after blowup is obtained for any initial condition in the half-plane $x_1 < 0$, because these solutions blow up in finite time $t_b$ with the same asymptotic behavior.
This result is confirmed in Fig.~\ref{fig3b}(b) showing the sequence of regularized solutions $\bx^{\nu}(t)$ with $\nu \to 0$. 

\begin{figure}
\centering
\includegraphics[width=0.99\textwidth]{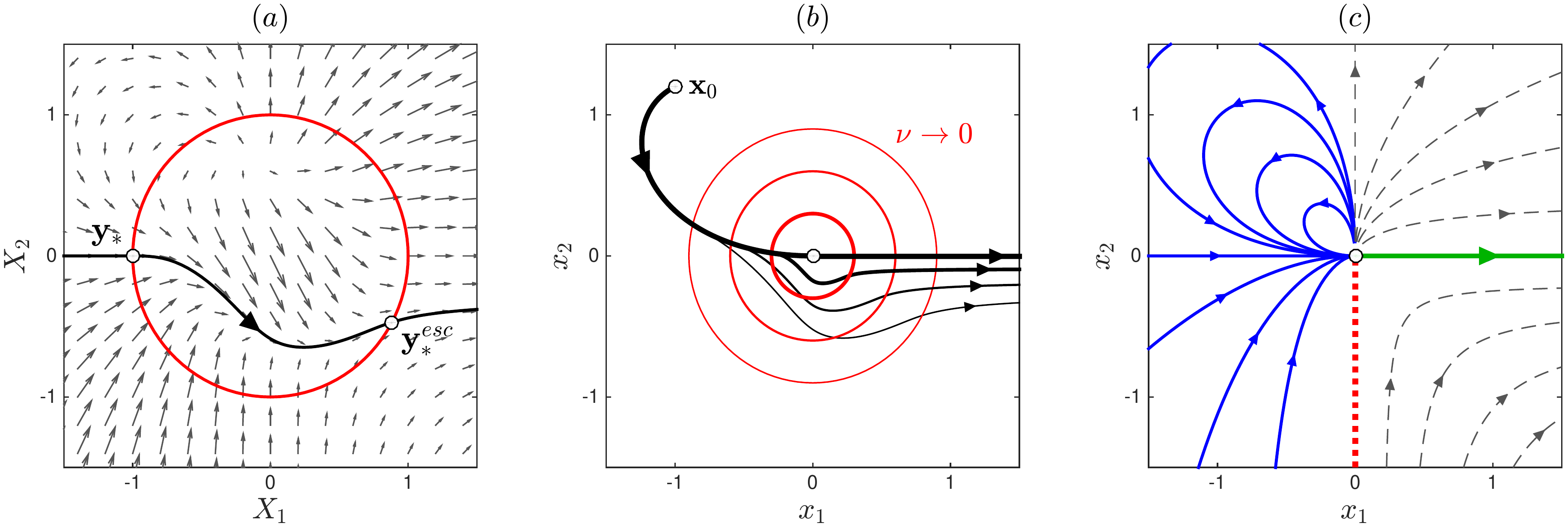}
\caption{Expelling regularization: (a) Solution $\bX(\tau)$ of the rescaled regularized problem; the \textcolor{red}{red} circle indicates the regularization region $R \le 1$. (b) Inviscid limit $\nu \to 0$ of the regularized solutions $\bx^{\nu}(t)$ from the same initial condition. The \textbf{black} curves depict solutions for different values of $\nu$. Together with the corresponding circular regularization regions $r \le \nu$, they are distinguished by the line width. (c) Solutions of the singular system obtained in the inviscid limit. Solid \textcolor{blue}{blue} curves correspond to solutions that blow up in finite time, continued identically past the singularity (\textcolor{ForestGreen}{green} line). Dashed lines correspond to solutions that do not blow up, when initial conditions are taken on the right half-plane. The \red{red} dotted line corresponds to initial conditions that may lead to a non-generic behavior after blowup.}
\label{fig3b}
\end{figure}

We conclude that the regularization provides a unique global-in-time solution to the problem (\ref{ODE})--(\ref{ODEb}) for generic initial condition, see Fig.~\ref{fig3b}(c). In particular, all solutions with $x_1(t_0) < 0$ blow up in finite time and continue past the singularity in exactly the same way (\ref{xsolleave}). The solutions with $x_1(t_0) > 0$, as well as $x_1(t_0) = 0$ and $x_2(t_0) > 0$ do not blowup. Finally, there is a set of initial conditions, $x_1(t_0) = 0$ and $x_2(t_0) \le 0$, which requires a separate analysis. However, this set has zero measure, i.e., the corresponding initial conditions are not generic.
 
It is crucial that, for generic initial conditions, the inviscid limit $\bx(t) = \lim_{\nu \to 0}\bx^\nu(t)$ is defined primarily by the properties of the original singular system, namely, by attractors of the renormalized equations. Thus, the solution does not depend on fine details of the regularization. This means that the solution $\bx(t)$ remains exactly the same under any (sufficiently small) deformation of the regularization function $\bG$. However, very different regularizations (e.g., trapping vs. expelling) may lead to different results. Further choices may appear in case of multiple attractors, as we demonstrate in the next simple example.
\end{example}

\begin{example}\normalfont
{Consider the one-dimension system (\ref{1dmodel}) from the Introduction, which is often used as a prototypical example of non-uniqueness.}
This system has no blowup. However, the equation possesses non-unique solutions starting exactly at the origin. One such solution is $x(t) \equiv 0$ for all {$t \ge t_0$, and there are two extremal solutions (\ref{exactSolb}),}
which leave the origin immediately.  Furthermore, there are a continuum infinity of solutions, which stay at the origin until an arbitrary time {$t_1 \ge t_0$ and peal off as $x(t)=\pm \left(\frac{2}{3}(t-t_1)\right)^{3/2}$ for $t > t_1$.}  

{The renormalized system for (\ref{1dmodel}) is trivial, since $y = x/|x| = \pm 1$ can take only two discrete values for any $x \ne 0$.} Both these values can be seen as fixed-point attractors in the terminology of Theorem~\ref{CorFixedPoint}, with the corresponding limiting solutions given by {(\ref{exactSolb}). Let us modify the problem by replacing \eqref{1dmodel}}  with a $\nu$--regularized dynamics (\ref{eqn:epsReg}) as discussed in \S \ref{sec:nuReg}. Similarly to (\ref{eq_Gdef}), we consider the $\nu$-regularization with 
\be\label{trivialReg}
G(x) = \xi(r)f(x)+[1-\xi(r)]\frac{\sigma+x}{2}, 
\ee
where $r = |x|$ and the sign $\sigma = \pm 1$ defines two different regularizations.
The resulting regularized functions $f^{\nu}(x)$ are shown in Fig.~\ref{figTriv}(a) by the blue lines. This regularization is expelling for the solution with {$x(t_0) = 0$, and the inviscid limit $\nu \to 0$ yields the extreme solution (\ref{exactSolb})} with the same sign $\sigma$. Another example is given by
\be\label{trivialReg2}
G(x) = \xi(r)f(x)+[1-\xi(r)]\frac{1-8x}{6},
\ee
with the function $f^{\nu}(x)$ shown in Fig.~\ref{figTriv}(b) by the red line. In this case, the solution starting at the origin is attracted to a fixed-point located slightly to the right from the origin (trapping regularization). Thus, the inviscid-limit solution is $x(t) \equiv 0$. 

\begin{figure}[t]
\centering
\includegraphics[width=0.7\textwidth]{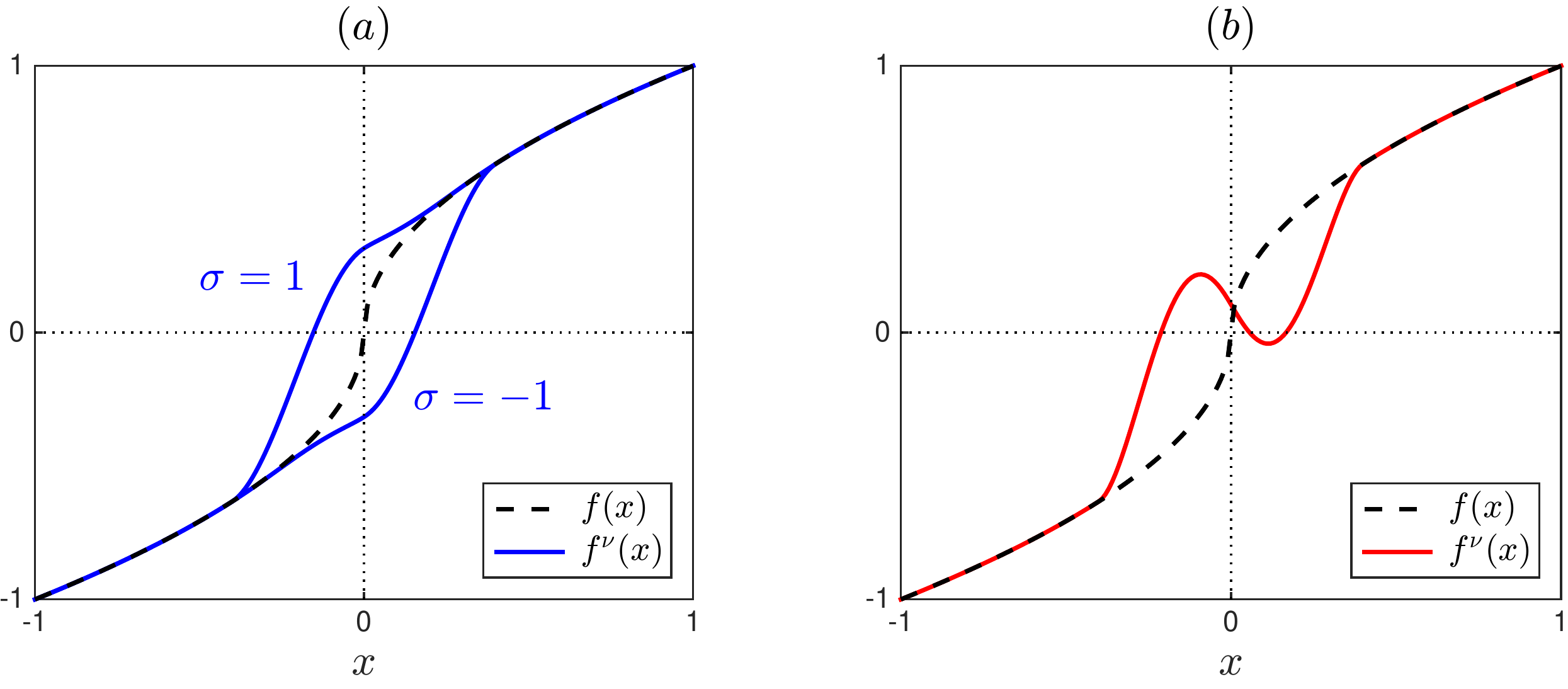}
\caption{ The function {$f(x) = x^{1/3}$} (dashed \textbf{black}) and its $\nu$-regularizations: (a) that expel solutions to the right ($\sigma = 1$) or to the left ($\sigma = -1$) in \textcolor{blue}{blue} and (b) that traps the solutions in \textcolor{red}{red}. The regularization region is $|x| \le \nu$ with $\nu = 0.4$.}
\label{figTriv}
\end{figure}

We see that only three (out of the infinite number of) solutions can be selected by a generic $\nu$-regularization. We remark that the two solutions appear simultaneously for the system with additive-noise regularization:
 \be\label{trivialEXnoisy}
d{x} =  {{\rm sgn}(x)|x|^\alpha dt} +\sqrt{2\kappa} \ dW_t, \quad x(0)=0,
\ee
where $W_t$ is the standard Wiener process and $\alpha\in (0,1)$. In this case, the zero-noise limit ($\kappa \to 0$) selects a non-trivial probability measure (spontaneously stochastic solution), which is distributed symmetrically between the two {extremal solutions (\ref{exactSolb});} see~\cite{attanasio2009zero,weinan2003note,flandoli2013topics}. 
\end{example}

\section{Limit cycle attractor and the non-unique inviscid limit}\label{sec6_lc}

In this section we consider the case  when the post-blowup dynamics in Theorem~\ref{thm:pastBlowup}(b)  is governed by a limit cycle attractor $\mathcal{A}'$. The limit cycle is represented (up to a phase shift in $S$) by a periodic solution $\tbY_p(S)$ of system ${d\tbY}/{dS} = \bFF_s( {\tbY})$:
\be \label{EqLC_P0}
\tbY_p(S) = \tbY_p(S+T),
\ee
with period $T$.  We define the mean value of the radial function $F_r(\by)$ on the limit cycle as

\be \label{EqLC_P0meanAav}
\langle F_r \rangle := 
\frac{I(0,T)}{T},\qquad
I(S_1,S_2) = \int_{S_1}^{S_2} F_r(\tbY_p(S))dS.
\ee
For further use, the integral function is extended for $S_1 > S_2$ as $I(S_1,S_2) = -I(S_2,S_1)$. Note that the function $I(S_1,S_2)-\langle F_r \rangle(S_2-S_1)$ is $T$-periodic with respect to both $S_1$ and $S_2$. Hence,
\be \label{EqBoundI}
\langle F_r \rangle(S_2-S_1)+C_m < I(S_1,S_2) < \langle F_r \rangle(S_2-S_1)+C_M
\ee
for some constants $C_m$ and $C_M$.

First, let us describe a family of solutions of the original problem (\ref{ODE})--(\ref{ODEb}), which start at the singularity and are induced by the limit cycle (\ref{EqLC_P0}). 

\begin{prop}
\label{propP}
Consider a periodic solution (\ref{EqLC_P0}) with a positive mean value, $\langle F_r \rangle > 0$. Then there is a family of solutions of system (\ref{ODE})--(\ref{ODEb}) starting at the singularity $\bx = 0$ at $t = t_b$ and having the form
\begin{align}\label{prop1}
\bx(t) = (t-t_b)^{\frac{1}{1-\alpha}}\bX_p\left(\ln\left[(t-t_b)^{\frac{1}{1-\alpha}}\right]+\zeta\right), 
\end{align}
where $\zeta\in\mathbb{R}$ is a constant parameter and the function $\bX_p$ has period $T\langle F_r \rangle$ with respect to its argument; this function is defined as $\bX_p:=\bx_p\circ \psi^{-1}$, where 
\begin{align} \label{prop1B}
\bx_p(s) &= 
e^{-\varphi(s,s)}\tbY_p(s),\\[3pt] 
\label{prop1C}
\varphi(s_1,s_2) &= \frac{1}{1-\alpha} 
\ln\left[\int_{-\infty}^{0}e^{-(1-\alpha)I(s_1+s',s_2)}ds'\right]
\end{align}
and  $\psi^{-1}:\mathbb{R}\mapsto\mathbb{R}$ is the well defined inverse map of the function  $\psi(s):=\varphi(s,0)$.
\end{prop}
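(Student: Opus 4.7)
The plan is to construct the family in the renormalized $(\tby,\tz)$ coordinates of Section~\ref{sec3} and then map back. First I would take $\tby(s) = \tbY_p(s)$ on all of $\mathbb{R}$ and integrate \eqref{zEqn} to get $\tz(s) = z_0 - I(s,0)$, with a free constant $z_0 \in \mathbb{R}$ that will end up parameterizing the family. Since $\langle F_r\rangle > 0$, the two-sided linear bound \eqref{EqBoundI} yields $\tz(s)\to -\infty$ linearly as $s\to -\infty$ and the improper integral $\int_{-\infty}^s e^{(1-\alpha)\tz(s')}\,ds'$ converges. I would then define
\begin{equation*}
t - t_b := \int_{-\infty}^s e^{(1-\alpha)\tz(s')}\,ds',
\end{equation*}
so that $t\to t_b$ and $r(t)=e^{\tz(s)}\to 0$ as $s\to -\infty$, realizing the initial condition at the origin.

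The central manipulation is the change of variables $s'=s+u$ combined with the additivity $\tz(s+u)-\tz(s)=-I(s+u,s)$, which factorizes
\begin{equation*}
t - t_b = e^{(1-\alpha)\tz(s)}\int_{-\infty}^0 e^{-(1-\alpha)I(s+u,s)}\,du = \exp\bigl[(1-\alpha)(\tz(s)+\varphi(s,s))\bigr].
\end{equation*}
This immediately gives $r(t)=(t-t_b)^{1/(1-\alpha)}e^{-\varphi(s,s)}$ and hence $\bx(t)=(t-t_b)^{1/(1-\alpha)}\bx_p(s)$ with $\bx_p$ exactly as in \eqref{prop1B}. To convert $s$ into a function of $t$ alone I would use the factorization identity
\begin{equation*}
\varphi(s,s_2) = \varphi(s,s) - I(s,s_2),
\end{equation*}
which follows from $I(s+u,s_2)=I(s+u,s)+I(s,s_2)$ by pulling a constant out of the integral defining $\varphi$. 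At $s_2=0$ this reads $\psi(s)=\varphi(s,s)-I(s,0)$, so $\tz(s)+\varphi(s,s) = z_0+\psi(s)$; identifying $\zeta:=-z_0$ then gives $\psi(s) = \ln\bigl[(t-t_b)^{1/(1-\alpha)}\bigr]+\zeta$, which after inverting $\psi$ yields exactly the claimed formula \eqref{prop1}.

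It remains to check that $\psi$ is a bijection of $\mathbb{R}$ and that $\bX_p = \bx_p\circ\psi^{-1}$ has the claimed period. Writing $\psi(s)=\frac{1}{1-\alpha}\ln J(s)$ with $J(s)=\int_{-\infty}^s e^{-(1-\alpha)I(w,0)}\,dw$, the function $J$ is strictly increasing with $J(-\infty)=0$ and $J(+\infty)=+\infty$ (again using $\langle F_r\rangle>0$ together with \eqref{EqBoundI}), so $\psi$ is a smooth bijection. For periodicity, $\tbY_p(s+T)=\tbY_p(s)$ gives $I(s+T+u,s+T)=I(s+u,s)$ and hence $\varphi(s+T,s+T)=\varphi(s,s)$ and $\bx_p(s+T)=\bx_p(s)$, while $I(s+T+u,0)=I(s+u,0)-T\langle F_r\rangle$ yields $\psi(s+T)=\psi(s)+T\langle F_r\rangle$; together these imply $\bX_p(\xi+T\langle F_r\rangle)=\bX_p(\xi)$.

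Finally, that the resulting $\bx(t)$ genuinely solves \eqref{ODE}--\eqref{ODEb} is automatic from the renormalization correspondence of Section~\ref{sec3}, which sends any renormalized trajectory to a trajectory of the original system. The hard part will be the bookkeeping of the algebraic identities relating $I$, $\varphi$, and $\psi$ --- in particular the factorization $\varphi(s,s_2)=\varphi(s,s)-I(s,s_2)$, which is precisely what cleanly decouples the phase/scale parameter $\zeta$ from the periodic profile $\bX_p$ and exposes the self-similar structure of \eqref{prop1}.
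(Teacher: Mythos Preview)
Your proposal is correct and follows essentially the same route as the paper: both take $\tby(s)=\tbY_p(s)$, integrate to $\tz(s)=-I(s,0)-\zeta$ (your $z_0=-\zeta$), define $t-t_b$ via the improper integral, apply the shift $s'\mapsto s+u$ to factor out $e^{(1-\alpha)\tz(s)}$ and recognize $\varphi(s,s)$, use the identity $\varphi(s,0)=\varphi(s,s)-I(s,0)$ to isolate $\psi(s)$, and then invert $\psi$ and check the periodicity relations. The only cosmetic difference is that the paper deduces bijectivity of $\psi$ from monotonicity of $\tilde t(s)$ and the limits of $\tz(s)$, whereas you write $\psi(s)=\frac{1}{1-\alpha}\ln\int_{-\infty}^s e^{-(1-\alpha)I(w,0)}dw$ directly.
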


\begin{proof}
Following relations (\ref{eq_t_of_s})--(\ref{zEqn}), solution $\bx(t)$ of (\ref{ODE}) at time $t = \tilde{t}(s)$ for $s\geq 0$ can be written as
\begin{align} \label{prProp0}
\bx(\tilde{t}(s)) = e^{\tz(s)}\tby(s)
\end{align}
where the functions $\tz(s)$, $\tby(s)$ and $\tilde{t}(s)$ satisfy the equations 
\begin{align} \label{prProp1}
\frac{d\tby}{ds} = \bFF_s( {\tby}), \qquad
\frac{d\tz}{ds}= F_r(\tby),  \qquad
\frac{d\tilde{t}}{ds} = e^{(1-\alpha)\tz(s)},
\end{align}
and $s$ is the auxiliary variable. By the assumption, the first equation possesses the periodic solution 
\begin{align} \label{prProp4add}
\tby(s) = \tbY_p(s).
\end{align}
The second equation in (\ref{prProp1}) is integrated as
\begin{align} \label{prProp4}
\tz(s) = -I(s,0)-\zeta,
\end{align}
where $I$ is given in (\ref{EqLC_P0meanAav}) and $\zeta$ is an arbitrary integration constant.

Notice that 
\begin{equation}
\label{eq:onemore1}
\lim_{s\to-\infty}\tz(s) = -\infty, \quad
\lim_{s\to +\infty}\tz(s) = +\infty, 
\end{equation}
because of (\ref{EqBoundI}) with $\langle F_r \rangle > 0$. Hence, $|\tilde{\bx}(s)| = e^{\tz(s)} \to 0$ in the limit $s\to-\infty$, i.e., the solution tends to the singularity. Let us choose the solution of the last equation in (\ref{prProp1}) as
\begin{align} \label{prProp5_ex}
\tilde{t}(s) 
= t_b+\int_{-\infty}^s  e^{(1-\alpha)\tz(s')} ds',
\end{align}
which has the property $\tilde{t}(s) \to t_b$ as $s\to-\infty$. Using (\ref{prProp4}), this yields
\begin{align} \label{prProp5}
\tilde{t}(s) 
= t_b+\int_{-\infty}^se^{-(1-\alpha)\left(I(s',0)+\zeta\right)}ds',
\end{align}
where the integral converges because of (\ref{EqBoundI}) for $\langle F_r \rangle > 0$.
Expression (\ref{prProp5}) can be rewritten after changing the integration variable $s' \mapsto s'+s$ and using (\ref{prop1C}) as
\begin{align} \label{prProp5y}
\tilde{t}(s) 
= t_b+e^{(1-\alpha)(\varphi(s,0)-\zeta)}.
\end{align}
Recalling the notation $\psi(s):=\varphi(s,0)$, we write (\ref{prProp5y}) in the form
\begin{align} \label{prProp5yNext}
\psi(s) = \ln\left[(\tilde{t}(s)-t_b)^{\frac{1}{(1-\alpha)}}\right]+\zeta.
\end{align}

Substituting the expression $I(s+s',0)= I(s+s',s)+ I(s,0)$ into the formula (\ref{prop1C}) for $\varphi(s,0)$ and then expressing $I(s,0)$ from (\ref{prProp4}), one can deduce that
\be
\label{eqNewC9a}
\psi(s) = \varphi(s,0) = \varphi(s,s)-I(s,0) = \varphi(s,s)+\tz(s)+\zeta.
\ee
In view of \eqref{prProp5yNext} and \eqref{eqNewC9a}, we have
\be
\label{eqNewC9b}
\tz(s) = \ln\left[(\tilde{t}(s)-t_b)^{\frac{1}{1-\alpha}}\right]-\varphi(s,s).
\ee
Using (\ref{prProp4add}), (\ref{eqNewC9b}) and the definitions (\ref{prop1B}) and \eqref{prProp0}, we express
\be
\label{eqNewC9c}
\bx(\tilde{t}(s))
= (\tilde{t}(s)-t_b)^{\frac{1}{1-\alpha}}\bx_p(s), \qquad s\in \mathbb{R}.
\ee

The function $t = \tilde{t}(s)$ is monotonically increasing with a positive derivative; see the last equation in (\ref{prProp1}). Due to the properties \eqref{eq:onemore1}, this function maps $\tilde{t}:\mathbb{R}\mapsto (t_b,\infty)$. Thus, the inverse function $s = s(t)$, which maps $(t_b,\infty)\mapsto\mathbb{R}$, is well defined. This allows one to rewrite \eqref{eqNewC9c} in the form
\be
\label{eqNewC9cNext}
\bx(t)
= (t-t_b)^{\frac{1}{1-\alpha}}\bx_p(s(t)), \qquad t\in (t_b,\infty).
\ee
The same properties of $\tilde{t}(s)$ imply that the function $\psi(s)$ {in (\ref{prProp5yNext})} is monotonically increasing with a positive derivative, and it maps $\mathbb{R} \mapsto \mathbb{R}$. Hence, the inverse function exists: $\psi^{-1}:\mathbb{R}\mapsto \mathbb{R}$. Using $\psi^{-1}$ in \eqref{prProp5yNext}  evaluated at $s = s(t)$, we have
\be\label{psiinvIdent}
s(t) = \psi^{-1}\left( \ln\left[(t-t_b)^{\frac{1}{1-\alpha}}\right]+\zeta \right).
\ee
Composing (\ref{eqNewC9cNext}) and \eqref{psiinvIdent}, we have 
\be
\bx(t) 
=  (t-t_b)^{\frac{1}{1-\alpha}}(\bx_p\circ\psi^{-1})\left( \ln\left[(t-t_b)^{\frac{1}{1-\alpha}}\right]+\zeta \right)
\ee
which yields \eqref{prop1} as claimed.

Because of the $T$-periodicity of $\tbY_p(s)$ in the expressions (\ref{EqLC_P0meanAav}), one has 
\be
I(s+T+s',s+T) = I(s+s',s),\quad 
I(s+T+s',0) = I(s+s',0)-\langle F_r \rangle T. 
\ee
Using these formulas in (\ref{prop1C}), one obtains for $\varphi(s,s)$ and $\psi(s) := \varphi(s,0)$: 
\be
\varphi(s,s) = \varphi(s+T,s+T),\quad
\psi(s) = \psi(s+T)-\langle F_r \rangle T. 
\ee
The former equality implies that  $\bx_p(s)$ in (\ref{prop1B}) is $T$--periodic, while the latter yields that the composition $\bX_p=\bx_p\circ \psi^{-1}$ has period $T\langle F_r \rangle$.
\end{proof}

\begin{example}\label{limexample}\normalfont
To illustrate Proposition~\ref{propP} with a simple example, let us consider the system 
	\begin{equation}
	\dot{\bx} = r^{\alpha-1}(x_1-x_2,\,x_1+x_2),
	\label{eq3.1}
	\end{equation}
with $\bx = (x_1,x_2)$ and $\alpha < 1$. The corresponding vector field is shown in Fig.~\ref{fig6}, demonstrating that all solutions emanate from the singularity at the origin. We will find these solutions explicitly assuming that they start from the singularity $\bx(t_b) = \mathbf{0}$ at time $t=t_b$. 
 
\begin{figure}[h]
\centering
\includegraphics[width=0.35\textwidth]{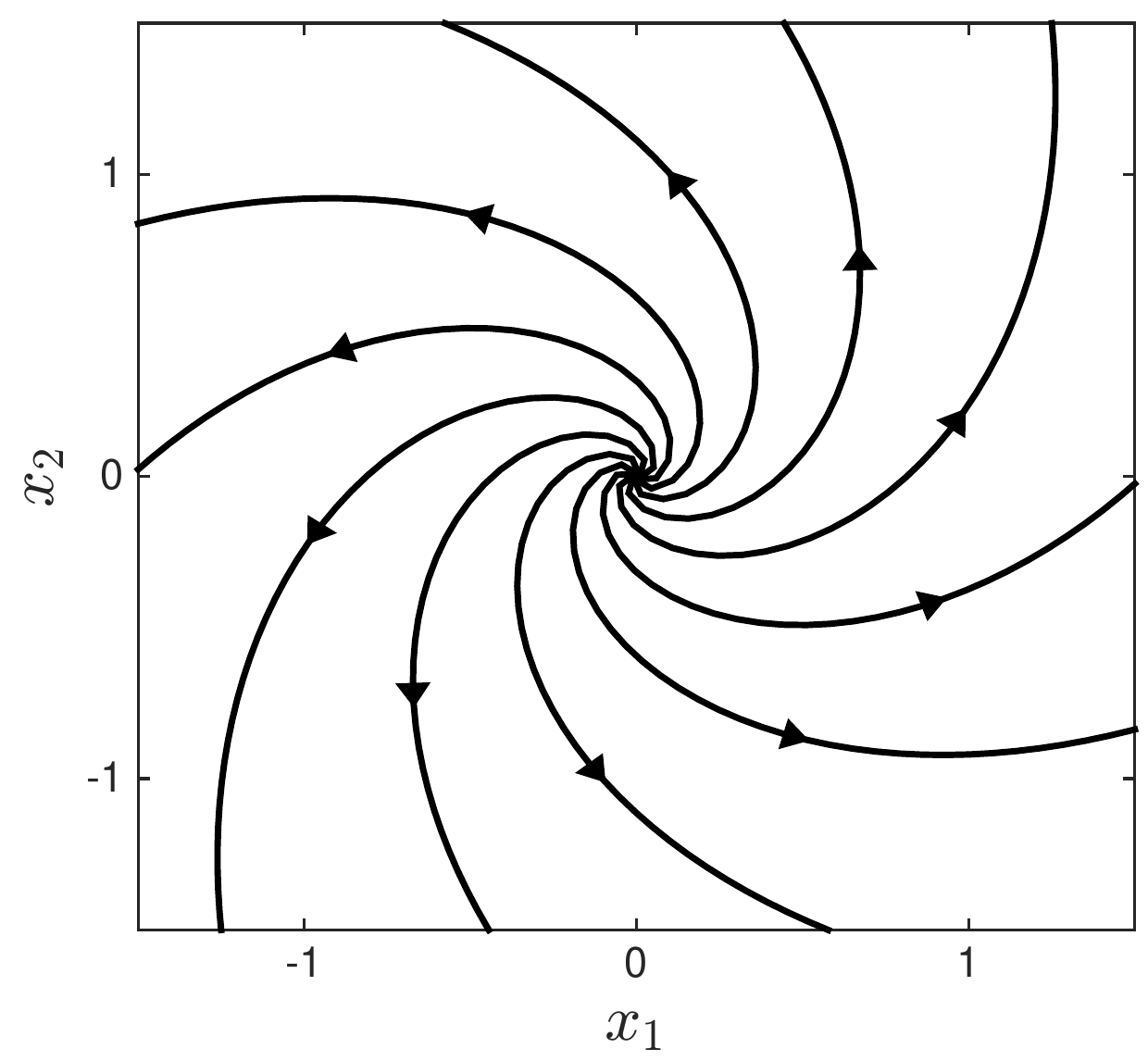}
\caption{Vector field and non-unique trajectories emanating from the origin for Eq.~(\ref{eq3.1}).}
\label{fig6}
\end{figure}

The right-hand side of system (\ref{eq3.1}) can be written as $r^\alpha\bFF(\by)$ with $\by = (y_1,y_2) = \bx/r$ and $\bFF(\by) = (y_1-y_2,y_1+y_2)$. As in Example \ref{Ex1}, it is convenient to work with the angle variable $\varphi\in S^1$ on the circle $\by=(\cos \varphi,\,\sin \varphi)$. Then the radial and circular components of $\bFF$ are given by
\begin{equation}\label{exampleFs}
F_r(\varphi)= F_s(\varphi) = 1.
\end{equation}
For the renormalized system \eqref{phiEqn}, the first equation $d{\tvf}/ds = F_s( {\tvf})$ has the particular solution $\tilde{\varphi}(s) = s$.  In the original variables, this yields the $2\pi$-periodic solution
	\begin{equation}
	 \tbY_p(s)= (\cos s,\ \sin s).
	\label{solnCirc}
	\end{equation}
Since $F_r \equiv 1$, we have $I(s_1,s_2)=s_2-s_1$ from Eq.~\eqref{EqLC_P0meanAav}.  A simple calculation using the definition \eqref{prop1C} yields 
	\begin{equation}
	\label{solnCircB}
	\varphi(s,s) =  -\frac{\ln(1-\alpha)}{1-\alpha},
	\quad
	\psi(s) := \varphi(s,0) =  s-\frac{\ln(1-\alpha)}{1-\alpha}.  		
	\quad
	\psi^{-1}(\xi) =  \xi+\frac{\ln(1-\alpha)}{1-\alpha}.  		
	\end{equation}
Then, formulas \eqref{prop1} and \eqref{prop1B} of Proposition~\ref{propP} yield the explicit solutions
	\begin{align}
	\label{eqEx6end}
	\bx(t) &= \left[(1-\alpha) (t-t_b)\right]^{\frac{1}{1-\alpha}}
	(\cos \xi,\, \sin \xi),\quad
	\xi = \ln\left[(t-t_b)^{\frac{1}{1-\alpha}}\right] +\zeta_1,
	\end{align}
where {$\zeta_1 = \zeta+(1-\alpha)^{-1}\ln(1-\alpha)$} is an arbitrary constant parameter.

We obtained a family of solutions with the same initial condition at the singularity, which depend $2\pi$-periodically on the constant phase {parameter $\zeta_1$.} When $\alpha \in (0,\,1)$, one can also construct solutions that sit at the origin for an arbitrary period of time prior to being shed off following any of the paths \eqref{eqEx6end}. This descries all (non-unique) solutions that start at the singularity at finite time. For system \eqref{eq3.1}, all such solutions are related to the limit cycle in the renormalized equation. This is not the case in general, as we will see in the next example: for systems of higher dimension, $d > 2$, solutions of Proposition~\ref{propP} form a zero-measure subset of all solutions originating from the singular point.
\end{example}

Now let us consider the $\nu$-regularized problem. The results of Theorem \ref{thm:pastBlowup} establish that any solution after blowup obtained by a sequence of expelling regularizations must have angular part that lives on an attractor of the renormalized system. We now prove that, for limit cycle attractors, solutions of Proposition~\ref{propP} are the only possibility arising from ``inviscid limit" of expelling regularizations.  

We assume that all characteristic exponents of the linearized problem near the limit cycle $\tbY_p(S)$ have negative real parts, except for the single vanishing exponent responsible to time-translations, $\tbY_p(S+\delta S)$. In this case the corresponding attractor $\mathcal{A}' = \{\bY = \tbY_p(S): S \in [0,\,T)\}$ is exponentially stable. More specifically, every solution $\tbY(S)$ with the initial conditions from the basin of attraction, $\tbY(0) = \by_*^{esc} \in \mathcal{B}(\mathcal{A}')$, approaches the limit cycle exponentially fast
\be \label{EqLC_P1}
\tbY(S) = \tbY_p(S+S_1)+\boldsymbol{\varepsilon}_Y(S)
\ee
with
\be\label{expDec}
|\boldsymbol{\varepsilon}_Y(S)| < C_Y e^{-\lambda S} \quad \text{for} \quad S \ge 0
\ee
for some constant phase $0 \le S_1 < T$ and $\lambda > 0$, $C_Y>0$; see, e.g.~\cite[p.~254]{hartman2002ordinary}.
Since $\tbY(S)$ tends to the limit cycle, we can define the average value 
\be \label{EqLC_P0mean}
 \langle F_r \rangle = \lim_{S\to\infty}\frac{1}{S}\int_0^S F_r (\tbY(S'))dS'=\frac{1}{T}\int_0^T F_r(\tbY_p(S))dS,
 \ee
which is the same as the average (\ref{EqLC_P0meanAav}) on the periodic attractor $\mathcal{A}'$. Recall that $\langle F_r \rangle \ge 0$ is a necessary condition for the expelling regularization. 

The next theorem characterizes all solutions that can be obtained in the inviscid limit after the blowup for a specific type of initial conditions. The result is a one-parameter family of solutions, depending on the viscous {subsequence. First, we} consider initial conditions of the self-similar blowup, which is governed by a fixed-point attractor $\bFF_s(\by_*)=0$ with the property $F_r(\by_*) < 0$, see Section~\ref{sec3}. Later, we will extend the results to more general initial conditions.

\begin{thm}
\label{theorem_per}
Let $\bx^\nu(t)$ be a solution to the regularized problem \eqref{eqn:epsReg} for the initial condition $\bx_0 = r_0\by_*$, where  $\bFF_s(\by_*)=0$ and $F_r(\by_*) < 0$. Assume that the regularization {is expelling} and $\by_*^{esc}$ is {in the basin of attraction} $\mathcal{B}(\cA')$ for an exponentially stable limit cycle $\cA'$ with the average $\langle F_r \rangle > 0$. 
Then the inviscid limit $\bx(t) = \lim_{n \to \infty}\bx^{\nu_n}(t)$ exists for the sequence (geometric progression)
\be \label{EqLCTn}
\nu_n = e^{-T\langle F_r \rangle n+\chi}
\ee
with an arbitrary fixed $\chi$. After the blowup, for $t > t_b$, the limiting solution coincides with the one given by Proposition~\ref{propP} with $\zeta = c-\chi$ for some regularization-dependent constant $c$. 
\end{thm}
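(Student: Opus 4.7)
The plan is to push the renormalized analysis of Section~\ref{sec4_post} further out in $S$, using exponential stability of the limit cycle to obtain precise asymptotics for $\tbY(S)$ and $\tZ(S)$, substitute them into \eqref{eq_Prv2}--\eqref{eq_Prv3}, and match the result against the explicit formulas of Proposition~\ref{propP}. The geometric subsequence $\nu_n = e^{-T\langle F_r\rangle n + \chi}$ is tuned precisely so that the phase along the limit cycle is frozen in the limit $n \to \infty$; a generic $\nu \to 0$ would sweep continuously through the one-parameter family of Proposition~\ref{propP}, which is exactly the source of non-uniqueness.

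First I would set up the asymptotics. By exponential stability \eqref{EqLC_P1}--\eqref{expDec}, $\tbY(S) = \tbY_p(S + S_1) + O(e^{-\lambda S})$; integrating $d\tZ/dS = F_r(\tbY)$ yields $\tZ(S) = \langle F_r\rangle S + q(S + S_1) + C_0 - \delta(S)$, where $q$ is the $T$-periodic primitive of $F_r \circ \tbY_p - \langle F_r\rangle$, $\delta(S) = O(e^{-\lambda S})$, and $C_0$ collects the constant terms. Since $\langle F_r\rangle > 0$, Proposition~\ref{prop_cond} ensures condition \eqref{eq_cond_Y} and hence the representation \eqref{eq_Prv2} applies; the lower bound \eqref{eq_lim1fst} moreover guarantees that $S_n := S(\tau^{\nu_n}(t))$ diverges as $n \to \infty$ for every fixed $t > t_b$.

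The heart of the argument is to extract the asymptotic phase from the implicit equation \eqref{eq_T_of_S}. Writing $S_n = nT + \sigma_n$ with $\sigma_n \in [0, T)$ and substituting $S' = S_n + u$, the exponential weight $e^{(1-\alpha)\langle F_r\rangle u}$ localizes the integrand near $u = 0$; combined with dominated convergence (the integrand is uniformly bounded by $e^{(1-\alpha)[\langle F_r\rangle u + \|q\|_\infty + \|\delta\|_\infty]}$) and the $T$-periodicity of $q$, one obtains
\[
\int_0^{S_n} e^{(1-\alpha)\tZ(S')}\, dS' = e^{(1-\alpha)\tZ(S_n)} \, e^{(1-\alpha)\varphi(\sigma_n + S_1, \sigma_n + S_1)}\, (1 + o(1)),
\]
where the limit integral is identified with $e^{(1-\alpha)\varphi(\sigma_n + S_1, \sigma_n + S_1)}$ via the definition \eqref{prop1C}. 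Equating this with $\nu_n^{-(1-\alpha)}(t - t_b) - \tau_{esc}$, absorbing the $nT$ growth using $\ln\nu_n = -T\langle F_r\rangle n + \chi$, and invoking the identity $\psi(s) = I(0, s) + \varphi(s, s)$ (which follows from splitting $I(s + s', 0) = I(s + s', s) + I(s, 0)$ inside \eqref{prop1C}), the relation distills to
\[
\psi(\sigma_n + S_1) = \ln\bigl[(t - t_b)^{1/(1-\alpha)}\bigr] + c - \chi + o(1),
\]
where $c := -C_0 + \langle F_r\rangle S_1 - q(0)$ depends on the regularization but is independent of $\chi$ and $t$. Strict monotonicity of $\psi$ then forces a unique limit $\sigma_\infty + S_1 = \psi^{-1}\bigl(\ln[(t-t_b)^{1/(1-\alpha)}] + c - \chi\bigr)$.

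To conclude I would assemble $\bx(t) = r(t)\by(t)$. The radial part $r(t) = \nu_n e^{\tZ(S_n)}$ converges, by the same asymptotics, to $(t - t_b)^{1/(1-\alpha)} e^{-\varphi(\sigma_\infty + S_1, \sigma_\infty + S_1)}$; the angular part is $\by(t) = \lim_n \tbY(S_n) = \tbY_p(\sigma_\infty + S_1)$ by exponential convergence combined with $T$-periodicity. Their product is $\bx(t) = (t - t_b)^{1/(1-\alpha)} \bx_p(\sigma_\infty + S_1)$, which via $\bX_p = \bx_p \circ \psi^{-1}$ is precisely \eqref{prop1} with $\zeta = c - \chi$. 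The main obstacle I expect is the uniform control required in the dominated convergence step: although the exponentially decaying correction $\delta(S)$ is pointwise small, one must verify it remains harmless uniformly in $n$ across the whole integration range, including for $u$ where $S_n + u$ is not yet large. This is exactly where the exponential rate $\lambda > 0$ from \eqref{expDec} is used, beyond the mere attracting property of $\mathcal{A}'$.
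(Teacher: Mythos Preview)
Your proposal is correct and follows essentially the same route as the paper: both use the exponential-stability asymptotics \eqref{EqLC_P1}--\eqref{expDec} to write $\tZ(S)=I(0,S+S_1)-c+o(1)$, substitute into the time integral \eqref{eq_T_of_S} (the paper's splitting into $\varepsilon_1,\varepsilon_2,\varepsilon_3$ is precisely the dominated-convergence step you describe), exploit the shift identity $\psi(s+T)=\psi(s)+T\langle F_r\rangle$ to absorb $nT$ against $\ln\nu_n$, and invert $\psi$ to identify the limiting phase. One small imprecision: when you write $S_n=nT+\sigma_n$ with $\sigma_n\in[0,T)$, you are using the same $n$ as the subsequence index, and there is no a~priori reason $S_n-nT$ lands in $[0,T)$; in fact its boundedness (and convergence) is what you \emph{derive} from the relation $\psi(\sigma_n+S_1)=\ln[(t-t_b)^{1/(1-\alpha)}]+c-\chi+o(1)$, exactly as the paper does with $\xi_n=S_n+S_1-nT$, so simply drop the constraint on $\sigma_n$.
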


\begin{proof}
As described in Section~\ref{sec4.2}, for the times $t \in [t_0,t_b]$, the inviscid limit is given by the blowup solution (\ref{eqFPgen2})--(\ref{eqFPgen}) of the original system (\ref{ODE})--(\ref{ODEb}). For the times after the blowup, $t > t_b$, the limit can be studied using the relation (\ref{eqRN1}), where $\bX(\tau)$ is the solution of the $\nu$-independent regularized system (\ref{ODErescale}). After leaving the regularization region, $\tau \ge \tau_{esc}$, this system is equivalent to equations (\ref{eq_T_of_S})--(\ref{zEqn2}) providing separately $\tilde{\tau}(S)$ and  $ \tilde{\bX}(S) = e^{\tZ(S)}\tbY(S)$ as functions of the auxiliary variable $S$. Thus, we start by studying the solutions $\tbY(S)$, $\tZ(S)$ and $\tilde{\tau}(S)$, where behavior of $\tbY(S)$ is already described by relation (\ref{EqLC_P1}).

Since the function $F_r(\bY):S^{d-1} \mapsto \mathbb{R}$ is smooth and defined on the sphere, there is a positive constant $c_{\textrm{var}}$ bounding the variation of this function as
\be \label{EqLCpr0}
|F_r(\bY)-F_r(\bY')| < c_{\textrm{var}} |\bY-\bY'|
\ee
for all $\bY,\bY'\in S^{d-1}$. Using this property with the relations (\ref{EqLC_P1}) and (\ref{expDec}), we have
\be \label{EqLCpr0b}
F_r(\tbY(S)) =  F_r(\tbY_p(S+S_1))+\varepsilon_r(S),
\ee
where the exponentially decaying correction term is bounded as
\be\label{expDec2}
|\varepsilon_r(S)| < C_r e^{-\lambda S} \quad \text{for} \quad S \ge 0
\ee
and the positive coefficient $C_r = c_{\textrm{var}}C_Y$.

The solutions $\tZ(S)$ of equation (\ref{zEqn2}) can be written using (\ref{EqLCpr0b}) as
\be \label{EqLCpr5}
\tZ(S) 
= \int_0^S F_r(\tbY(S'))dS'
= \int_0^S F_r(\tbY_p(S'+S_1))dS'+\int_0^S \varepsilon_r(S')dS'.
\ee
Using the integral notation from (\ref{EqLC_P0meanAav}) and introducing the quantities
\be \label{EqLCpr7}
c = I(0,S_1)-\int_0^\infty \varepsilon_r(S')dS',
\qquad
\varepsilon_Z(S) = -\int_S^\infty \varepsilon_r(S')dS',
\ee
we write (\ref{EqLCpr5}) after changing the integration variable $S' \mapsto S'+S_1$ as 
\be \label{EqLCpr8}
\tZ(S) 
= I(0,S+S_1)-c+\varepsilon_Z(S)
= -I(S+S_1,0)-c+\varepsilon_Z(S).
\ee
Notice that the integrals in (\ref{EqLCpr7}) converge because of the bound (\ref{expDec2}). This bound guarantees also that
\be\label{expDec3}
|\varepsilon_Z(S)| < C_Z e^{-\lambda S} \quad \text{for} \quad S \ge 0
\ee
and the positive constant $C_Z = C_r/\lambda$.

The function $\tilde{\tau}(S)$ is given by equation (\ref{eq_T_of_S}), which we write using (\ref{EqLCpr8}) in the form
\be \label{EqLCpr9}
\begin{array}{rcl}
\tilde{\tau}(S) & = & \displaystyle
\tau_{esc}+\int_{0}^S e^{(1-\alpha)\left[-I(S'+S_1,0)-c+\varepsilon_Z(S')\right]} dS'
\\[15pt]
& = & \displaystyle
\tau_{esc}+\int_{-S}^0 e^{(1-\alpha)\left[-I(S'+S+S_1,0)-c+\varepsilon_Z(S+S')\right]} dS',
\end{array}
\ee
where we changed the integration variable $S'\mapsto S+S'$ in the last expression. 
This can be recast as
\be \label{EqLCpr9Bx}
\tilde{\tau}(S) = \tau_{esc}
+e^{-(1-\alpha)c}
\left[1+\varepsilon_1(S)+\varepsilon_2(S)+\varepsilon_3(S)\right]
\int_{-\infty}^0 e^{-(1-\alpha)I(S'+S+S_1,0)} dS',
\ee
where we introduced
\begin{align}
 \label{EqLCpr10a}
\varepsilon_1(S) 
&= 
-
\left(\int_{-\infty}^{0} e^{-(1-\alpha)I(S'+S+S_1,0)} dS'\right)^{-1}
\int_{-\infty}^{-S} e^{-(1-\alpha)I(S'+S+S_1,0)} dS',
\quad
\\
\label{EqLCpr10c}
\varepsilon_2(S) 
&= 
\left(\int_{-\infty}^{0} e^{-(1-\alpha)I(S'+S+S_1,0)} dS'\right)^{-1}
\int_{-S}^{-S/2} e^{-(1-\alpha)I(S'+S+S_1,0)}\left(e^{\varepsilon_Z(S+S')}-1\right) dS',
\\
 \label{EqLCpr10d}
\varepsilon_3(S) 
&= 
\left(\int_{-\infty}^{0} e^{-(1-\alpha)I(S'+S+S_1,0)} dS'\right)^{-1}
\int_{-S/2}^0 e^{-(1-\alpha)I(S'+S+S_1,0)}\left(e^{\varepsilon_Z(S+S')}-1\right) dS'.
\end{align}
Using the function $\varphi$ defined in (\ref{prop1C}), we reduce (\ref{EqLCpr9Bx}) to the form 
\be \label{EqLCpr9B}
\tilde{\tau}(S) = \tau_{esc}
+e^{(1-\alpha)\left[\varphi(S+S_1,0)-c\right]}
\left[1+\varepsilon_{\tau}(S)\right],
\ee
where $\varepsilon_{\tau}(S) = \varepsilon_1(S)+\varepsilon_2(S)+\varepsilon_3(S)$.  

We now show that 
\be \label{proofEx1}
\lim_{S \to \infty }\varepsilon_1(S) = 0,\quad
\lim_{S \to \infty }\varepsilon_2(S) = 0,\quad
\lim_{S \to \infty }\varepsilon_3(S) = 0,
\ee
which implies 
\be \label{proofEx1fin}
\lim_{S \to \infty }\varepsilon_{\tau}(S) = 0.
\ee
From the bounds (\ref{EqBoundI}), for arbitrary $S_a < S_b$, it follows that
\be \label{EqBoundI2}
A_m e^{\beta(S+S_1)}\left(e^{\beta S_b}-e^{\beta S_a}\right)
< \int_{S_a}^{S_b} e^{-(1-\alpha)I(S'+S+S_1,0)}dS' 
< A_M e^{\beta(S+S_1)}\left(e^{\beta S_b}-e^{\beta S_a}\right).
\ee
with $\beta = (1-\alpha)\langle F_r \rangle > 0$ and some positive constants $A_m$ and $A_M$.
Using these inequalities in the definition (\ref{EqLCpr10a}) yields
\be \label{EqLCpr10aN}
\left|\varepsilon_1(S)\right| < A_M e^{-\beta S}/A_m.
\ee
From this estimate, the first limit in (\ref{proofEx1}) easily follows.
Absolute value of the last factor in (\ref{EqLCpr10c}) has the form $\left|e^{\varepsilon_Z(S+S')}-1\right|$ with $-S \le S' \le -S/2$, and it can be bounded by unity for large $S$ using (\ref{expDec3}). Then the second limit in (\ref{proofEx1}) is obtained similarly using (\ref{EqBoundI2}). For $-S/2 \le S' \le 0$ and sufficiently large $S$, one can use the same relation (\ref{expDec3}) and elementary analysis to show 
\be \label{EqExtraNew}
\left|e^{\varepsilon_Z(S+S')}-1\right| < \left|e^{C_Z e^{-\lambda S/2}}-1\right| < 2C_Z e^{-\lambda S/2}.
\ee
Using (\ref{EqBoundI2}) and (\ref{EqExtraNew}) in the expression (\ref{EqLCpr10d}) proves the last limit in (\ref{proofEx1}).

{Now let us return to the regularized solution $\bx^\nu(t)$.}
The functions $\tbY(S)$, $\tZ(S)$ and $\tilde{\tau}(S)$ given by (\ref{EqLC_P1}), (\ref{EqLCpr8}) and (\ref{EqLCpr9B}) define implicitly the function $\bX(\tau)$ with  $\tau = \tilde{\tau}(S)$ and $\bX(\tilde{\tau}(S)) = \tilde{\bX}(S) = e^{\tZ(S)}\tbY(S)$. Then, relations (\ref{eqRN1}) provide the implicit representation for the function $\bx^{\nu}(t)$ with $t=\tilde{t}^{\nu}(S)$ and $\bx^{\nu}(\tilde{t}^{\nu}(S))= \tilde{\bx}^{\nu}(S)$ defined by
\begin{align}
\label{eqNN1}
\tilde{\bx}^\nu(S)=\nu\tilde{\bX}(S) = \nu e^{\tZ(S)}\tbY(S), \quad 
\tilde{t}^{\nu}(S) = t_b+\nu^{(1-\alpha)}\tilde{\tau}(S).
\end{align}
Recall that $\tilde{\tau}(S)$ is the unbounded strictly increasing function with the fixed initial value $\tau(0) = \tau_{esc}$; see (\ref{eq_T_of_S}). Hence,
given a fixed time $t > t_b$ and sufficiently small $\nu > 0$, there exists a unique $S$ satisfying 
\begin{align}\label{EqLCpr12}
t =\tilde{t}^{\nu}(S)= t_b+\nu^{(1-\alpha)}\tilde{\tau}(S).
\end{align}
We denote by $S_n$ the solution corresponding to $\nu_n = e^{-T\langle F_r \rangle n+\chi}$ from (\ref{EqLCTn}). Since $\nu_n \to 0$ as $n \to \infty$, the solution $S_n$ exists and is unique for large $n$. For this solution, we rewrite equation (\ref{EqLCpr12}) using (\ref{EqLCpr9B}) as
\begin{align}\label{proofExA1}
t = t_b+\nu_n^{(1-\alpha)}\tau_{esc}
+e^{(1-\alpha)\left[\varphi(S_n+S_1,0)-T\langle F_r \rangle n-c+\chi\right]}
\left(1+\varepsilon_{\tau}(S_n)\right).
\end{align}
One can see from (\ref{EqLC_P0meanAav}) and (\ref{prop1C}) that 
\begin{equation}\label{proofExA1z}
\begin{array}{rcl}
I(S_n+S_1,0)+T\langle F_r \rangle n &=& I(S_n+S_1-nT,0),\\[3pt]
\varphi(S_n+S_1,0)-T\langle F_r \rangle n &=& \varphi(S_n+S_1-nT,0).
\end{array}
\end{equation}
Therefore, Eq.~(\ref{proofExA1}) takes the form
\begin{align}\label{proofExA1yy}
t = t_b+\nu_n^{(1-\alpha)}\tau_{esc}
+e^{(1-\alpha)\left[\varphi(\xi_n,0)-\zeta\right]}
\left(1+\varepsilon_{\tau}(S_n)\right),
\end{align}
where we defined 
\begin{align}\label{proofExA1yyex}
\xi_n = S_n+S_1-nT, \quad \zeta = c-\chi. 
\end{align}
Expressing $\psi(\xi_n) := \varphi(\xi_n,0)$, we have
\be
\label{proofExA1yyex2}
 \psi(\xi_n) = \frac{1}{1-\alpha}\ln\left[ \frac{t-t_b-\nu_n^{(1-\alpha)}\tau_{esc}}{1+\varepsilon_{\tau}(S_n)}\right]+\zeta.
\ee
Using the inverse map $\psi^{-1}$ defined in Proposition~\ref{propP} and solving \eqref{proofExA1yyex2} for $\xi_n$ yields
\begin{align}\label{proofExA1yyy}
\xi_n = \psi^{-1} \left(
 \frac{1}{1-\alpha}\ln \left[\frac{t-t_b-\nu_n^{(1-\alpha)}\tau_{esc}}{1+\varepsilon_{\tau}(S_n)}\right]+\zeta
 \right).
\end{align}
Recall that, since $\psi$ is continuously differentiable and monotonically increasing,  $\psi^{-1}$ is also continuous. Thus, it is possible to take the limit $n \to \infty$ in the right-hand side with both $\nu_n \to 0$ and $\varepsilon_{\tau}(S_n) \to 0$, which we denote as 
\begin{align}\label{proofExA1yyyy}
s = \lim_{n \to \infty} \xi_n = \psi^{-1} \left(
\ln \left[(t-t_b)^{ \frac{1}{1-\alpha}}\right]+\zeta
 \right).
\end{align}

Finally, from the first relation of (\ref{eqNN1}) with expressions (\ref{EqLC_P1}) and (\ref{EqLCpr8}), we have
\begin{align}\label{proofExA5new}
\tilde{\bx}^{\nu}(S) = 
\nu e^{-I(S+S_1,0)-c+\varepsilon_Z(S)}
\left[\tbY_p(S+S_1)+\boldsymbol{\varepsilon}_Y(S)\right].
\end{align}
Taking this relation for $\nu = \nu_n = e^{-T\langle F_r \rangle n+\chi}$ and $S = S_n$,  yields 
\begin{align}\label{proofExA5}
\tilde{\bx}^{\nu_n}(S_n) = 
e^{-I(S_n+S_1,0)-T\langle F_r \rangle n+\chi-c+\varepsilon_Z(S_n)}
\left[\tbY_p(S_n+S_1)+\boldsymbol{\varepsilon}_Y(S_n)\right].
\end{align}
Adopting notations (\ref{proofExA1yyex}) and using relations (\ref{EqLC_P0}), (\ref{proofExA1z}), we write
\begin{align}\label{proofExA5fin}
\tilde{\bx}^{\nu_n}(S_n) = 
e^{-I(\xi_n,0)-\zeta+\varepsilon_Z(\xi_n-S_1+nT)}
\left[\tbY_p(\xi_n)+\boldsymbol{\varepsilon}_Y(\xi_n-S_1+nT)\right].
\end{align}
We can define the limit 
\begin{align}\label{proofExA5fin}
\tilde{\bx}(s) := \lim_{n \to \infty }\tilde{\bx}^{\nu_n}(S_n) = 
e^{-I(s,0)-\zeta}\tbY_p(s), 
\end{align}
which was computed using (\ref{proofExA1yyyy}) and (\ref{expDec}), (\ref{expDec3}). Recalling the relation (\ref{prProp4}), we see that the relations (\ref{proofExA1yyyy}) and (\ref{proofExA5fin}) provide exactly the solution $\bx(t)$ of Proposition~\ref{propP} in the implicit form. {Therefore,} we proved that the values of the regularized solution $\bx^{\nu_n}(t)$ for given $t > t_b$ (these values are determined by the auxiliary variable $S = S_n$) converge to $\bx(t)$.
\end{proof}

It is now straightforward to extend the result of Theorem~\ref{theorem_per} from the specific initial data $\bx_0 = r_0\by_*$ to an arbitrary point from the basin of attraction of $\by_*$. {As we know from Section~\ref{sec3}, all such initial points lead to the blowup with the same asymptotic form. The next statement describes their continuation past the blowup time.} 

\begin{corollary}
\label{corP}
The statement of Theorem~\ref{theorem_per} remains valid for arbitrary initial condition $\bx_0 = r_0\by_0$, where $\by_0$ belongs to the basin of attraction $\mathcal{B}(\{\by_*\})$ of the fixed-point attractor.
\end{corollary}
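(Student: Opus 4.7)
The plan is to reduce the claim to Theorem~\ref{theorem_per} by continuous dependence on initial data for the rescaled dynamics in a neighborhood of $\by_*$. The pre-blowup analysis of Section~\ref{sec3} yields, for any $\by_0 \in \mathcal{B}(\{\by_*\})$, convergence $\by(t) \to \by_*$ together with the self-similar radial asymptotics $r(t) \sim [(\alpha-1)F_r(\by_*)(t_b-t)]^{1/(1-\alpha)}$ as $t \to t_b^-$. Continuous dependence of (\ref{eqn:epsReg}) on $\nu$ on $[t_0, t_b-\delta]$ for any fixed $\delta > 0$ implies that the regularized trajectory $\bx^\nu(t)$ shadows the ideal solution up to entering the sphere $\{r=\nu\}$ at a time $t_{ent}^\nu \to t_b$, with rescaled entry data $\by_{ent}^\nu := \bx^\nu(t_{ent}^\nu)/\nu \to \by_*$ and $\tau_{ent}^\nu := \nu^{-(1-\alpha)}(t_{ent}^\nu - t_b) \to \tau_{ent}$.

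Next, I would pass to the rescaled solution $\bX^\nu(\tau) := \bx^\nu(t)/\nu$ with $\tau = \nu^{-(1-\alpha)}(t-t_b)$, which solves the $\nu$-independent problem (\ref{ODErescale}) but with the perturbed initial condition $\bX^\nu(\tau_{ent}^\nu) = \by_{ent}^\nu$ in place of $\by_*$. Continuous dependence on initial conditions for ODEs then gives uniform convergence of $\bX^\nu$, on every compact $\tau$-interval, to the solution $\bX$ used in Theorem~\ref{theorem_per}; in particular the exit data satisfy $(\tau_{esc}^\nu, \by_{esc}^\nu) \to (\tau_{esc}, \by_*^{esc})$. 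Since $\mathcal{B}(\mathcal{A}')$ is open, $\by_{esc}^\nu$ lies in the same basin for all sufficiently small $\nu$, and by exponential stability of the limit cycle the asymptotic phase $S_1^\nu$ from (\ref{EqLC_P1}) depends continuously on $\by_{esc}^\nu$, yielding $S_1^\nu \to S_1$.

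With these ingredients, I would replay the renormalization argument of Theorem~\ref{theorem_per} with $(\by_{esc}^\nu, S_1^\nu)$ in place of $(\by_*^{esc}, S_1)$. All estimates (\ref{EqLCpr5})--(\ref{EqLCpr9B}) and (\ref{proofExA1})--(\ref{proofExA5fin}) go through verbatim, but with $\nu$-dependent quantities; in particular the constant $c^\nu$ from (\ref{EqLCpr7}) depends continuously on the initial data of the rescaled flow and thus tends to some regularization- and $\by_0$-dependent $c^\star$ as $\nu_n \to 0$ along the geometric sequence (\ref{EqLCTn}). The identity (\ref{proofExA1yyy}) then selects a limiting phase $s$ satisfying (\ref{proofExA1yyyy}) with $\zeta = c^\star - \chi$, and formula (\ref{proofExA5fin}) reconstructs exactly the solution given by Proposition~\ref{propP}.

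The main obstacle will be securing uniform control of the error terms $\boldsymbol{\varepsilon}_Y^\nu$, $\varepsilon_Z^\nu$, and $\varepsilon_\tau^\nu$ in the proof of Theorem~\ref{theorem_per} as both $S_n \to \infty$ and $\nu_n \to 0$ simultaneously. The decay bound (\ref{expDec}) is uniform for initial conditions ranging over any compact subset of $\mathcal{B}(\mathcal{A}')$, a standard consequence of hyperbolicity of the limit cycle, so confining $\by_{esc}^\nu$ to a fixed compact neighborhood of $\by_*^{esc}$ for all small $\nu$ yields common constants $C_Y$ and $\lambda$. This uniformity is what permits the bounds (\ref{EqLCpr10aN})--(\ref{EqExtraNew}) to pass to the limit along $\{\nu_n\}$ and lets the $\nu \to 0$ and $S_n \to \infty$ limits commute.
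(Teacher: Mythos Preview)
Your proposal follows essentially the same route as the paper: reduce to Theorem~\ref{theorem_per} by noting that the rescaled trajectory $\bX^\nu(\tau)$ solves (\ref{ODErescale}) with $\nu$-dependent initial data $(\tau_{ent}^\nu,\by_{ent}^\nu)\to(\tau_{ent},\by_*)$, then invoke continuous dependence together with the exponential stability of the limit cycle to obtain uniform-in-$\tau$ convergence of $(\tbY,\tZ)$ to the $\nu$-independent solution used in the theorem. The paper's proof is terser (it cites~\cite[p.~254]{hartman2002ordinary} for the uniform convergence and simply asserts that this suffices to carry over the estimates), whereas you spell out the mechanism via uniform decay constants on compact subsets of $\mathcal{B}(\mathcal{A}')$; one small point is that your limiting constant $c^\star$ is in fact the same $c$ as in Theorem~\ref{theorem_per} and does not depend on $\by_0$, since all the perturbed quantities $(\by_{esc}^\nu,S_1^\nu,\varepsilon_r^\nu)$ converge to their unperturbed values.
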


\begin{proof}
In Theorem~\ref{theorem_per}, we proved our statement for the specific initial condition with $\by(t_0) = \by_*$. Consider now the case of initial conditions with arbitrary $\by_0 = \mathcal{B}(\{\by_*\})$, i.e., for any initial condition leading to the same asymptotic form of self-similar blowup. In this case, both the time $t_{ent}^\nu$ and the point $\by_{ent}^\nu$, at which the solution enters the regularization region, depend on $\nu$. Let us write the regularized solution $\bx^\nu(t)$ in the form (\ref{eqRN1}), where the function $\bX(\tau)$ satisfies the same $\nu$-independent equations (\ref{ODErescale}) but for the $\nu$-dependent initial condition $\bX(\tau_{ent}^\nu) = \by_{ent}^\nu$.  From the results of Sections~\ref{sec3} it follows that $\by_{ent}^\nu \to \by_*$ in the inviscid limit $\nu \to 0$. Also, the corresponding rescaled time $\tau_{ent}^\nu$ converges to $\tau_{ent}$ given in (\ref{eqRN3}).

Continuous dependence on initial conditions guarantees that, for any fixed $\tau$, the $\nu$-dependent solution $\bX(\tau)$ converges to the analogous $\nu$-independent solution considered in the proof of Theorem~\ref{theorem_per} as $\nu \to 0$. Recall that the solution can be represented as $\bX(\tau) = e^{Z(\tau)}\bY(\tau)$ given implicitly by the renormalized equations (\ref{eq_T_of_S})--(\ref{zEqn2}) outside the regularization region. The convergence for $Z$ and $\bY$ in the inviscid limit is uniform with respect to $\tau$, which follows from the exponential stability of periodic solutions; see, e.g.~\cite[p.~254]{hartman2002ordinary}. One can verify that such uniform convergence is sufficient for extending the proof of Theorem~\ref{theorem_per} to the more general case under consideration.
\end{proof}

The non-uniqueness of post-blowup dynamics described in Theorem~\ref{theorem_per} and Corollary~\ref{corP} was observed after the blowup in the infinite dimensional shell model of turbulence in~\cite{mailybaev2016spontaneous}, where the periodic attractor has the form of a periodic wave in the renormalized system. Below we provide a much simpler illustrative example of finite dimension. 

\begin{example}\label{example4}\normalfont
Consider the system of three equations (\ref{ODE}) with $\bx = (x_1,x_2,x_3)$ and the right-hand side
	\begin{equation}
	\bff(\bx) = r^\alpha\left[\mathbf{a}+\frac{y_3}{2}\mathbf{b}+\left(y_3^2-\frac{1}{4}\right)\mathbf{c}\right],
	\quad \alpha = \frac{1}{3},
	\label{eq_ex4_1}
	\end{equation}
where $r = |\bx|$, $\by = \bx/r$ and
	\begin{equation}
	\mathbf{a} = (-y_2,y_1,0),\quad 
	\mathbf{b} = (y_1,y_2,y_3),\quad 
	\mathbf{c} = \mathbf{a}\times\mathbf{b}.
	\label{eq_ex4_2}
	\end{equation}
In this case the angular and radial parts of the vector field take the form 
	\begin{equation}
	\bFF_s(\by) = \mathbf{a}+\left(y_3^2-\frac{1}{4}\right)\mathbf{c},\quad
	F_r(\by) = \frac{y_3}{2}.
	\label{eq_ex4_3}
	\end{equation}
Phase portrait of the system $d\by/ds = F_s(\by)$ on the sphere $\by \in S^2$ is shown in Fig.~\ref{fig4}. There are two fixed-point solutions (attractor and repeller) and two periodic solutions (attractor and repeller), which confine the qualitative behavior of all other solutions. With the color in Fig.~\ref{fig4} we indicate the regions with $F_r < 0$ (blue) and $F_r > 0$ (red).

\begin{figure}
\centering
\includegraphics[width=0.85\textwidth]{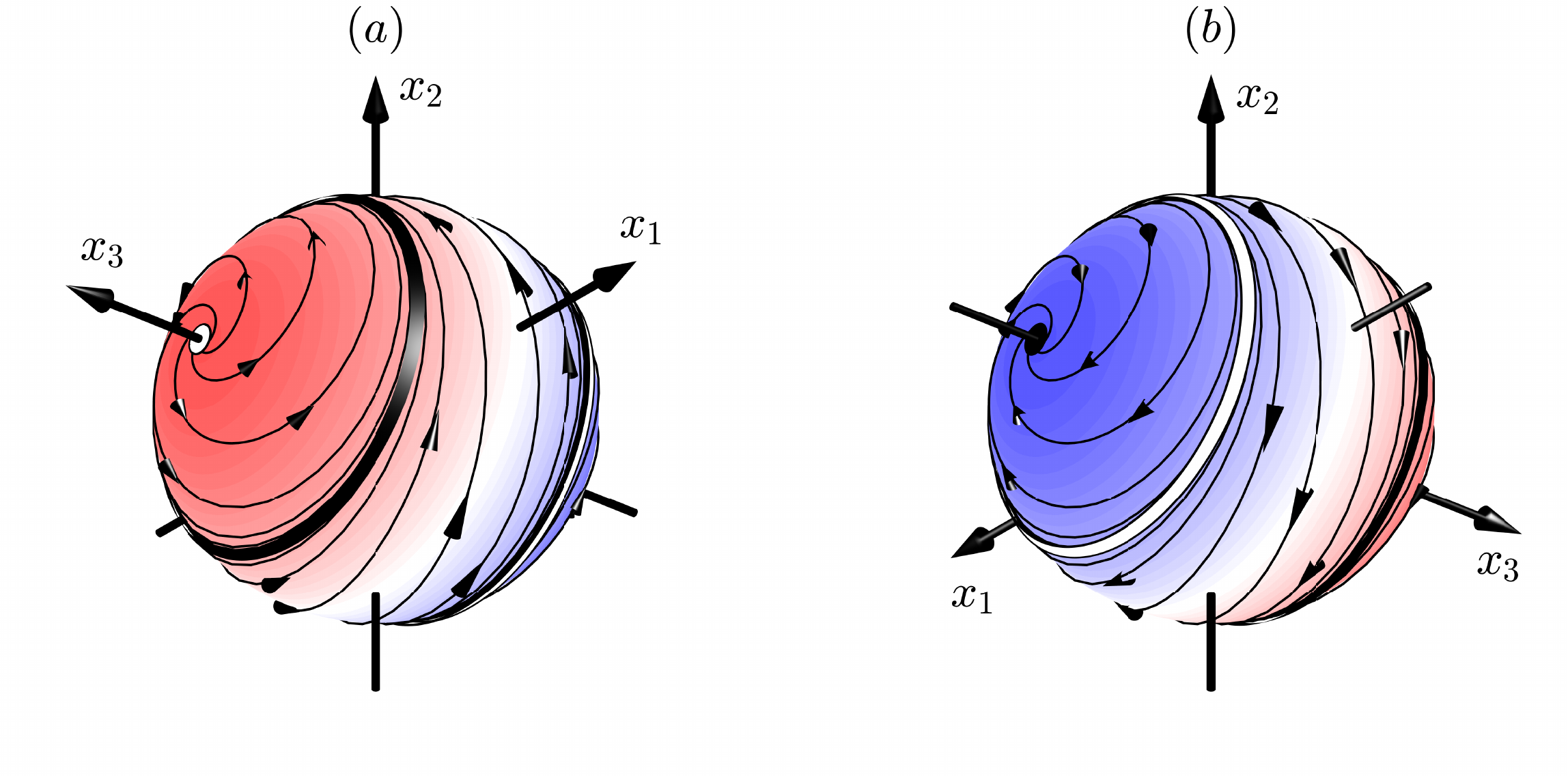}
\caption{Vector field $\bFF_s(\by)$ on the sphere $\by \in S^2$ from two different points of view. The color indicates the sign and magnitude of the axial component $F_r(\by)$ with \textcolor{blue}{blue} corresponding to $F_r < 0$ and \textcolor{red}{red}  to $F_r > 0$. (a) Fixed-point repeller (white dot) and periodic attractor (black strip). (b) Fixed-point attractor (black dot) and periodic repeller (white strip). }
\label{fig4}
\end{figure}

Let us consider the fixed-point attractor $\by_* = (0,\,0,-1)$, show as a black dot in Fig.~\ref{fig4}(b). All solutions with $\by_0 \in \mathcal{B}(\{\by_*\})$ blow up in finite time. The basin of attraction $\mathcal{B}(\{\by_*\})$ {is bounded by} the unstable limit cycle (white strip) in Fig.~\ref{fig4}(b). To define solutions after the blowup, let us consider the regularization (\ref{eq_Gdef}) with $\mathbf{G}_0 = (0,\,0.1,\,1)$. The corresponding solution $\bX(\tau)$ of the regularized system (\ref{ODErescale}) is shown by the bold line in Fig.~\ref{fig8n}(a). The regularization is expelling: the solution $\bX(\tau)$ leaves the unit ball at the point $\by_*^{esc}$. This point belongs to the basin of attraction $\mathcal{B}(\mathcal{A}')$, where $\mathcal{A}'$ is the stable limit cycle shown with the black strip in Fig.~\ref{fig4}(a).

\begin{figure}
\centering
\includegraphics[width=0.76\textwidth]{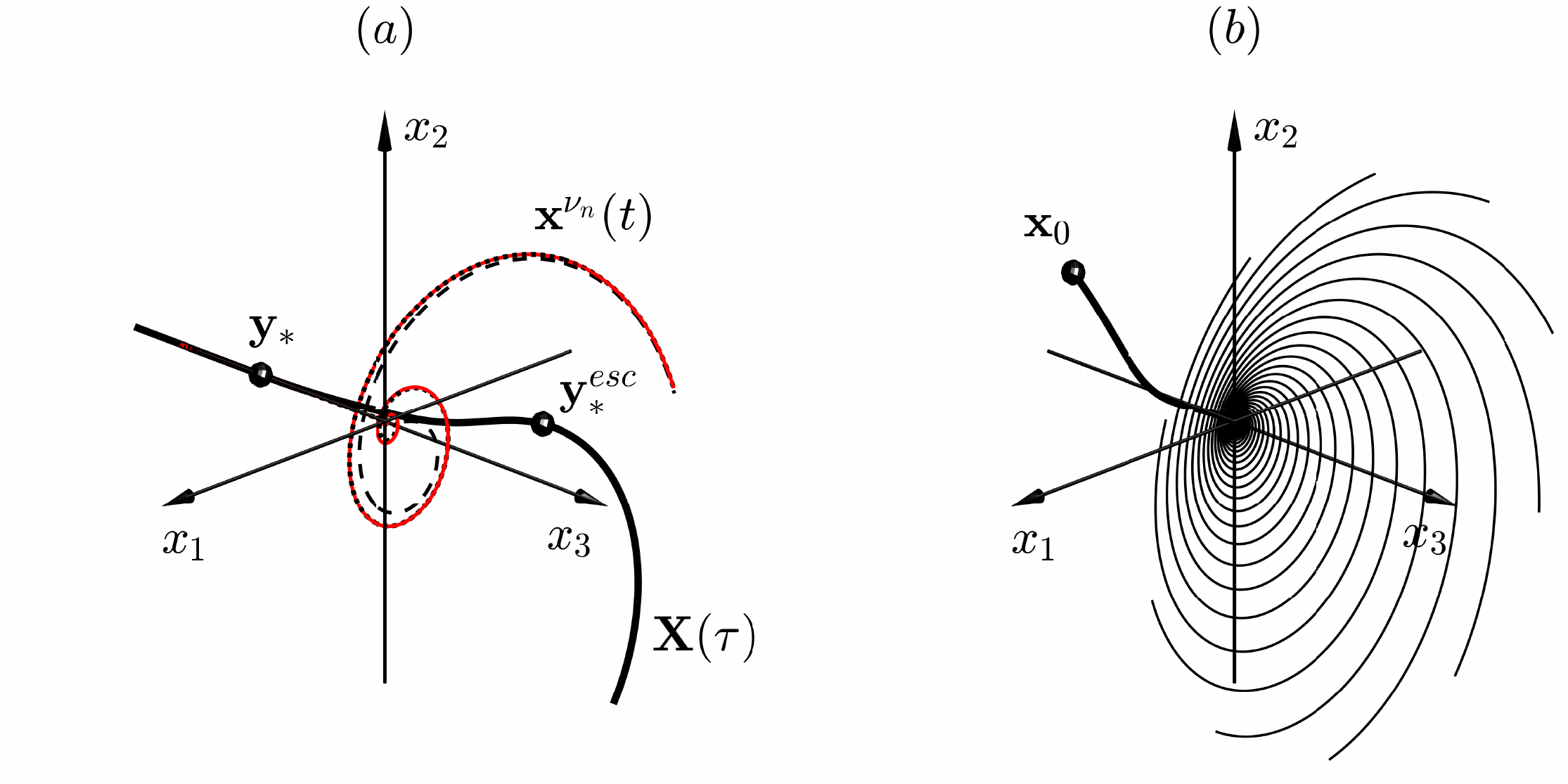}
\caption{(a) Solid \textbf{bold}  line shows the solution $\bX(\tau)$ of the rescaled regularized problem (\ref{ODErescale}). This solution enters the unit ball at $\by_*$ and exists at $\by_*^{esc}$. Regularized solutions $\bx = \bx^{\nu}(t)$ with the same initial condition are shown for $\nu = \nu_n$ from (\ref{EqLCTn}) with $\chi = 0$ and $n = 1$ (dashed line), $n = 2$ (dotted line) and $n = 3$ (\textcolor{red}{red} solid line). These solutions converge to the solution of the singular problem as $n \to \infty$. (b) Different inviscid-limit solutions with the same initial condition $\bx_0$ obtained for the subsequences (\ref{EqLCTn}) with $\chi/T = 0,\,0.1,\ldots,0.9$.}
\label{fig8n}
\end{figure}

By Theorem~\ref{theorem_per} and Corollary~\ref{corP}, solutions in the inviscid limit are given implicitly by expressions (\ref{prop1}) obtained by solving equations (\ref{prProp1}) on the limit cycle attractor. In our case, integration of the first equation in (\ref{prProp1}) with the first expression of (\ref{eq_ex4_3}) yields
	\begin{equation}
	\tbY_p(s) = \left(\frac{\sqrt{3}}{2}\cos s,\,\frac{\sqrt{3}}{2}\sin s,\,\frac{1}{2}\right),
	\label{eq_ex4_4}
	\end{equation}
where the period $T = 2\pi$. The respective average value, $\langle F_r \rangle =  T^{-1}\int_0^T F_r(\tbY_p(s))ds = 1/4$. 
As in Example~\ref{limexample}, from  (\ref{prop1}) we obtain solutions of the form
	\begin{equation}
	\bx(t) = \left[\frac{(1-\alpha)(t-t_b)}{4}\right]^{\frac{1}{1-\alpha}}
	\left(\frac{\sqrt{3}}{2}
	\cos s,\,\frac{\sqrt{3}}{2}\sin s,\,\frac{1}{2}\right),
	\quad
	s = 4\ln\left[(t-t_b)^{\frac{1}{1-\alpha}}\right]+\zeta,
	\label{eq_ex4_6}
	\end{equation}
where $\zeta$ is an arbitrary constant parameter.

The relation (\ref{EqLCTn}) of Theorem~\ref{theorem_per} is verified in Fig.~\ref{fig8n}(a), which presents the solutions computed numerically for $n = 1,2,3$ and $\chi = 0$. These solutions converge to the solution (\ref{eq_ex4_6}) for a specific value of $\zeta$.
A family of solutions (\ref{eq_ex4_6}) for different $\zeta \in [0,\,T)$ span the conical surface as shown in Fig.~\ref{fig8n}(b). Only these solutions are selected in the inviscid limit of the regularization $\bG(\bx)$ under consideration, as well as of regularizations obtained by any (not too large) deformation of $\bG(\bx)$. We stress that solutions (\ref{eq_ex4_6}) represent only a small (zero-measure) subset of all solutions starting at the singularity $\bx = 0$. In fact, one can show that all solutions inside the cone in Fig.~\ref{fig8n}(b) also originate at the singularity.

We conclude that the inviscid limit in this example is non-unique (depends on the particular subsequence $\nu_n \to 0$) for times after the blowup. However, all possible inviscid limits are restricted to a very small (zero measure) subset of all possible solutions. Once again, remarkably,  this subset is selected by the ideal system, i.e., it is not sensitive to the details of the regularization procedure. 

\end{example}

 \section{Discussion}

In the present work, we studied a class of singular ordinary differential equations with an isolated non-Lipschitz point, when a continuation of solutions past singularity (termed as blowup) is infinitely non-unique. We showed that solutions chosen by a generic ``viscous'' regularization procedure, which first smooths the vector field in $\nu$-vicinity of a singular point and then sends $\nu \to 0$,  remain highly constrained by the underlying {singular} equation and these constraints are (nearly) independent of regularization procedure. Such constraints are obtained from the solution-dependent renormalization, which maps the pre-blowup and post-blowup dynamics into two different infinite evolutions in the new phase-time variables. This describes {the asymptotic form of blowup} by the attractor of the first evolution, and the post-blowup continuation by the attractor in the second ``new life'' evolution. {The viscous regularization acts as a bridge between these two infinitely long ``lives'' of the renormalized solution.}

The restrictions imposed in this way {on a selected non-unique solution} depend crucially on the type of attractors. For the pre-blowup dynamics, the fixed-point attractors describe an asymptotically self-similar power-law dynamics closely resembling {self-similar} finite-time singularities in partial differential equations~\cite{eggers2009role}. As for more sophisticated attractors, we can mention {analogies with the} chaotic Belinsky-Khalatnikov-Lifshitz (BKL) singularity in general relativity~\cite{belinskii1970oscillatory,choquet2009general} {or chaotic blowup in turbulence models~\cite{mailybaev2012c,de2017chaotic,campolina2018chaotic}.} Attractors play even more decisive role for post-blowup dynamics, because they describe the exact dynamics rather that its asymptotic form. It is appealing to compare the case of a fixed-point attractor, when the unique solution is selected, with shock wave formation in conservation laws. In both cases the viscous regularization chooses a specific unique solution, which is not sensitive (within certain limits) to the details of this regularization. {For example,} the same shock in the Burgers equation results from the limit of vanishing viscosity or hyper-viscosity. 
{A similar renormalization procedure on both sides of blowup can be formally introduced for the Burgers equation, where the attractors take the form of traveling waves propagating in log-log space-time coordinates~\cite{eggers2009role,mailybaev2016spontaneously}.}

The case of a periodic attractor offers non-unique choices within a one-parameter family, resulting from different geometric subsequences of vanishing viscous parameters $\nu$. Though we are not aware of real-world physical phenomena that have this property, such periodic non-uniqueness was readily observed in a popular infinite-dimensional model of turbulence (shell model)~\cite{mailybaev2016spontaneous}. We remark that the same shell model demonstrates a different regime, when the attractor is chaotic~\cite{mailybaev2016spontaneously}. In this case the solution is expected to be spontaneously stochastic (continued as a probability measure), in the same spirit as in the Kraichnan model \cite{bernard1998slow}, where the source of randomness is introduced and removed through the regularization procedure. We conjecture that a similar behavior can be found for singular systems studied in this paper, which is a topic of ongoing research.

 \section*{Acknowledgements}
\noindent The authors are grateful to Enrique Ramiro Pujals for very useful discussions.
Research of TD is supported by NSF-DMS grant 1703997, and AM  by CNPq grant No. 302351/2015-9.

\bibliographystyle{siam} 
\bibliography{refs}

\end{document}